\def\f{\frac}
\def\pa{\partial}
\def\e{\eqref}
\def\lab{\label}
\def\D{\Delta}
\def\R{\mathbb R}
\def\Z{\mathbb Z}
\def\C{\mathbb C}
\def \i{\mathrm i}
 \numberwithin{equation}{section}
\theoremstyle{definition}
 \newtheorem{thm}{Theorem}[section]
 \newtheorem{cor}{Corollary}[section]
 \newtheorem{lem}{Lemma}[section]
 \newtheorem{prop}{Proposition}[section]
 \theoremstyle{definition}
 \newtheorem{defn}{Definition}[section]
 \newtheorem{rem}{Remark}[section]
\newcommand{\be}{\begin{equation}}
\newcommand{\ee}{\end{equation}}
\newcommand{\beq}{\begin{equation*}}
\newcommand{\eeq}{\end{equation*}}
\begin{document}

\begin{CJK*}{GB}{gbsn}

\title{\bf  Asymptotic stability of wave equations coupled by velocities}

\author{ Yan Cui\thanks{School of Mathematical
Sciences, Fudan University, Shanghai 200433,
China. E-mail: \texttt{ycui14@fudan.edu.cn}. }
\quad Zhiqiang Wang\thanks{School of Mathematical Sciences and Shanghai Key Laboratory for Contemporary Applied Mathematics,
Fudan University, Shanghai 200433, P. R. China.
E-mail: \texttt{wzq@fudan.edu.cn}. This author was partially supported by the National Science Foundation of China (No. 11271082),
the State Key Program of National Natural Science Foundation of China (No. 11331004).}}

\date{June 3, 2015}

\maketitle

\begin{abstract}
This paper is devoted to study the asymptotic stability of wave equations  with constant coefficients coupled by velocities.
By using Riesz basis approach, multiplier method and frequency domain approach respectively,
we find the sufficient and necessary condition, that the  coefficients satisfy,
leading to the exponential stability  of the system. In addition, we give the optimal decay rate in one dimensional case.
 \end{abstract}

\noindent\textbf{ 2010 Mathematics Subject  Classification}.
93B05, 
93D15,  
35L04 

\noindent\textbf{ Key Words}.  Wave equation,  Coupled system, Asymptotic stability

  \section{Introduction and Main Results}
  In this paper,  we consider the long time behavior of the solution to a system of wave equations with constant coefficients coupled by velocities.
  In particular, we  want to study what kind of conditions, that the coefficients satisfy, lead to the  exponential stability  of the system.

 In the case of scalar  wave equation,  there are numerous results on asymptotic stability or stabilization  with  internal or boundary damping.
 Cox  and Zuazua \cite{1994-CoxZZ} studied, by Fourier analysis, the energy decay  of
 \beq
 \lab{CZ}
 \begin{cases}
  u_{tt}-  u_{xx}+a(x)u_t =0 \quad & \text{in}\  (0,1)\times (0,+\infty), \\
  u(0,t)=u(1,t)=0   & \text{in} \  (0,+\infty),\\
  (u,u_{t})(0)=(u^0,u^1)  & \text{in} \ (0,1)
  \end{cases}
 \eeq
with indefinite damping.
As a corollary,  the exponential decay of  the energy and its optimal rate were shown in  \cite{1994-CoxZZ} when $a>0$  in $(0,1)$.
Using Multiplier method,
Alabau-Boussouira \cite{2002-AB-SICON, 2010-AB-JDE} and Alabau et al.  \cite{2002-ACK} proved the indirect stabilization of wave systems coupled by displacements,
for  instance,
  \beq
 \lab{fa}
 \begin{cases}
  u_{tt}- \Delta u+a(x)u_t+b(x) v =0  \quad & \text{in } \ \Omega\times (0,+\infty), \\
  v_{tt}- \Delta v-b(x) u=0  & \text{in } \ \Omega\times (0,+\infty),\\
  u=v=0   & \text{on} \ ~\Gamma\times  (0,+\infty),\\
  (u,u_{t})(0)=(u^0,u^1),  \  ( v,v_{t})(0)=(v^0,v^1)   & \text{in} \ ~ ~\Omega
 \end{cases}
 \eeq
That is, the damping is acted only in one equation and the total energy of the whole system decays polynomially due to the coupling effect.
 Using the criteria of polynomial decay in \cite{2005-LiuRao-ZAMP},
Liu  and Rao proved in \cite{2007-LiuRao-JMAA}, by frequency domain approach,
the polynomial stability of a partially damped wave system with  weak coupling by displacements and multiple propagation speeds.
 Liu  and Rao \cite{2009-LiuRao-DCDS} also proved the indirect stabilization, by Riesz basis approach,  with optimal polynomial decay rate for  a  wave system which is coupled by displacements in one dimensional case.

Recently, Alabau-Boussouira, Wang and Yu \cite{2015-AWY} obtained the indirect stabilization, combining multiplier method and weighted energy techniques,
  for  the following  damped wave system with variable coefficients coupled by velocities
 \beq
 \lab{awy}
  \left\{
 \begin{split}
  &u_{tt}- \Delta u+\rho(x ,u_{t})+b(x) v_{t} =0 \quad  &\text{in} \ \Omega\times (0,+\infty), \\
  &v_{tt}- \Delta v-b(x) u_{t}=0   &\text{in} \ \Omega\times (0,+\infty),\\
  &u=v=0   &\text{on} \ \Gamma\times  (0,+\infty), \\
  &(u,u_{t})(0)=(u^0,u^1),  \  ( v,v_{t})(0)=(v^0,v^1)  &\text{in}\  \Omega ~~~~~~\quad\qquad
  \end{split}
 \right.
 \eeq
 The decay speed is shown to  change corresponding to the various properties of the nonlinear damping $\rho(x,u_t)$,
 especially, if $\rho(x,u_t)\equiv \alpha u_t \ (\alpha>0),b(x)\equiv b>0 $, the total energy decays exponentially.
 This phenomenon indicates that the velocity coupling has different impact compared to the displacement coupling.
 Being regarded as perturbation, the coupling through displacements is  compact,  while the coupling through velocities is bounded.
On the other hand, the controllability (asymptotic stability) of coupled wave systems with general coefficients
 is closely related to the synchronization (asymptotic synchronization),
 according to the pioneer results on synchronization by Li and Rao \cite{2013-LR-CAM, 2014-LRH-COCV} (see also \cite{Hzy}).
 For the above  reasons, we focus on the question:
 What kind of coefficients can lead to the  exponential stability  of the general coupled wave system by velocities?

Suppose that $\Omega$ is an bounded open set in $\mathbb{R}^d$ with $C^2$ boundary $\Gamma=\pa \Omega$.
Consider the following general wave system coupled by velocities
 \be
 \lab{wave}
  \left\{
 \begin{split}
  &u_{tt}- \D u+\alpha u_{t}+\beta v_{t} =0 \quad\quad\   &\text{in } \ \Omega\times (0,+\infty),\\
 & v_{tt}- \D v+\gamma u_{t}+\eta v_{t}=0\quad\quad\quad &\text{in } \ \Omega\times (0,+\infty),\\
 & u=v=0\quad\quad\quad\quad  &\text{on} ~\  \Gamma\times  (0,+\infty),\\
 & (u,u_{t})(0)=(u^0,u^1),  \  ( v,v_{t})(0)=(v^0,v^1)\quad\quad &\text{in} \  ~\Omega\qquad \qquad\quad
  \end{split}
 \right.
 \ee
where $\alpha,\beta,\gamma, \eta \in \R$ are constants.

\begin{defn}
System \eqref{wave} is said to be exponential stable if there exist constants $M>0$ and $\omega >0$ such that
\be \lab{en1} E(t)\leq Me^{-\omega t}E(0) \ee
where the (total) energy is defined by
\be \lab{wave-energy}
  E(t)=\f12\int_\Omega|u_t|^2+|\nabla u|^2+|v_{t}|^2+|\nabla v|^2dx \ee
and $\omega$ is the corresponding decay rate.
\end{defn}

The task of this paper is to find the conditions that the coefficients matrix
$\begin{pmatrix}  \alpha & \beta \\
                     \gamma & \eta \\  \end{pmatrix}$
 should satisfy such that System \eqref{wave} is exponential stable.
Our main result is the following theorem:
 \begin{thm}
\lab{thm1}
System \eqref{wave} is exponential stable if and only if
\be
 \lab{2}
 \begin{cases}
 \alpha+\eta>0,\\

                 \alpha \eta-\beta\gamma>0 \end{cases}
                 \ee
i.e.,  the two eigenvalues of the coefficient matrix
$\begin{pmatrix}  \alpha & \beta \\   \gamma & \eta \\  \end{pmatrix}$ both have positive real part.
\end{thm}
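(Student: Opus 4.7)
The plan is to recast \eqref{wave} as an abstract evolution equation $U_t=\mathcal{A}U$ on the Hilbert space $\mathcal{H}=(H_0^1(\Omega)\times L^2(\Omega))^2$ endowed with the energy inner product coming from \eqref{wave-energy}, with generator
\beq
\mathcal{A}(u,p,v,q)=(p,\,\D u-\alpha p-\beta q,\,q,\,\D v-\gamma p-\eta q),
\eeq
on $D(\mathcal{A})=((H^2(\Omega)\cap H_0^1(\Omega))\times H_0^1(\Omega))^2$. Since the velocity coupling is a bounded perturbation of the skew-adjoint free wave generator, $\mathcal{A}$ generates a $C_0$-semigroup on $\mathcal{H}$, and exponential stability is characterised, via the Gearhart--Huang--Pr\"uss theorem, by $\i\R\subset\rho(\mathcal{A})$ together with $\sup_{\beta\in\R}\|(\i\beta I-\mathcal{A})^{-1}\|<\infty$.

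The decisive step is a spectral decomposition. Projecting onto the orthonormal basis $\{e_k\}_{k\geq1}$ of Dirichlet Laplacian eigenfunctions ($-\D e_k=\mu_k e_k$) and writing $u=\sum a_k e_k$, $v=\sum b_k e_k$, the eigenproblem $\lambda U=\mathcal{A}U$ decouples mode by mode into a $2\times2$ determinant condition whose expansion factors as
\begin{align*}
& \lambda^4+(\alpha+\eta)\lambda^3+(2\mu_k+\alpha\eta-\beta\gamma)\lambda^2+(\alpha+\eta)\mu_k\lambda+\mu_k^2 \\
&\qquad=(\lambda^2+\lambda_+^M\lambda+\mu_k)(\lambda^2+\lambda_-^M\lambda+\mu_k),
\end{align*}
where $\lambda_\pm^M$ are the eigenvalues of $M=\begin{pmatrix}\alpha&\beta\\ \gamma&\eta\end{pmatrix}$. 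Hence the point spectrum of $\mathcal{A}$ consists of the four families $\lambda^\sigma_{k,\pm}=\tfrac12(-\lambda_\sigma^M\pm\sqrt{(\lambda_\sigma^M)^2-4\mu_k})$, $\sigma\in\{+,-\}$, and their real parts tend to $-\tfrac12\operatorname{Re}\lambda_\pm^M$ as $\mu_k\to\infty$. Since positivity of the trace and determinant of $M$ is the classical characterisation of $\operatorname{Re}\lambda_\pm^M>0$, condition \eqref{2} is equivalent to $\sup_{\lambda\in\sigma(\mathcal{A})}\operatorname{Re}\lambda\leq-c<0$.

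Starting from this spectral picture I would complete the sufficient direction by either of the two routes matching the methods advertised in the introduction. The frequency-domain route: the same mode decomposition diagonalises the resolvent equation $(\i\beta I-\mathcal{A})U=F$ into $4\times4$ blocks whose determinants stay bounded below uniformly in $k\geq1$ and $\beta\in\R$, yielding the Huang--Pr\"uss estimate (the multiplier method furnishes an equivalent real-variable derivation). The Riesz-basis route: using a compact perturbation argument from the uncoupled, undamped problem one shows that the generalised eigenvectors attached to $\{\lambda^\sigma_{k,\pm}\}$ form a Riesz basis of $\mathcal{H}$, and the optimal decay rate then coincides with $-\sup_{\lambda\in\sigma(\mathcal{A})}\operatorname{Re}\lambda$. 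For the necessary direction, if either inequality in \eqref{2} fails then at least one $\lambda_\pm^M$ has non-positive real part, and the factorisation above immediately supplies a spectral point of $\mathcal{A}$ with non-negative real part, precluding any exponential decay.

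The main obstacle is precisely the uniformity in $k$: pointwise asymptotic stability of each mode follows from Routh--Hurwitz applied to the quartic, but exponential decay of the semigroup demands a rate independent of $\mu_k$, i.e.\ that no high-frequency resonance between the velocity coupling and the principal part erodes $\operatorname{Re}\lambda^\sigma_{k,\pm}$ as $\mu_k\to\infty$. The factorisation into the two damped Klein--Gordon polynomials $\lambda^2+\lambda_\pm^M\lambda+\mu_k$ is exactly what tames this difficulty, reducing uniformity to the single matrix-level bound $\operatorname{Re}\lambda_\pm^M\geq c>0$ supplied by \eqref{2}.
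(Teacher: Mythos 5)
Your overall architecture is sound and lands in the same family of methods as the paper: recast \eqref{wave} as an abstract Cauchy problem, analyze the spectrum mode by mode over the Dirichlet eigenbasis, and conclude via Gearhart--Huang--Pr\"uss or a Riesz basis of generalized eigenvectors. The one genuinely different (and cleaner) organizational choice is that you factor the mode-$k$ characteristic polynomial directly as $(\lambda^2+\lambda_+^M\lambda+\mu_k)(\lambda^2+\lambda_-^M\lambda+\mu_k)$ in terms of the eigenvalues of the coupling matrix $M$, whereas the paper first performs a real Schur reduction of $M$ to two canonical forms (Lemma \ref{lem1}, Proposition \ref{prop2}) and only then obtains the corresponding factorizations case by case. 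Your version subsumes both cases at once and makes the necessity direction immediate: if some $\lambda^M_\sigma$ has $\Re\lambda^M_\sigma\le 0$, then since the two roots of $\lambda^2+\lambda^M_\sigma\lambda+\mu_k$ have real positive product $\mu_k$ their real parts share a sign, and their sum $-\Re\lambda^M_\sigma\ge 0$ forces an eigenvalue of $\mathcal{A}$ in the closed right half-plane; taking that eigenvector as initial data kills exponential decay. That is exactly the paper's necessity argument, stated more uniformly.

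The soft spot is the sufficiency direction, which you leave at the level of a plan, and the specific justification you offer for the Huang--Pr\"uss condition \eqref{cond2} --- that the $4\times4$ block determinants stay bounded below uniformly in $k$ and $\xi$ --- is not the right quantity. What is needed is a bound on $\|(\i\xi-\mathcal{A})^{-1}\|$ in the \emph{energy} norm, i.e.\ control of the entries of the inverse block with the $u,v$ components weighted by $\sqrt{\mu_k}$; a lower bound on the determinant alone does not deliver this, since the relevant cofactors grow with $\mu_k$ and $\xi$. The estimate is true and the computation closes, but it has to be carried out (the paper sidesteps it with a compactness/contradiction argument using the dissipation identity \eqref{nei2} and the Nirenberg interpolation inequality). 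Two smaller points: (i) a negative spectral bound does not by itself imply exponential decay of a $C_0$-semigroup, so the spectral picture is motivation rather than proof --- you acknowledge this; (ii) the velocity coupling is a \emph{bounded}, not compact, perturbation of the free wave generator (the paper stresses this), so ``compact perturbation argument'' is the wrong phrase for the Riesz basis route: the paper's mechanism is Bari's theorem (Lemma \ref{Riesz}), i.e.\ quadratic closeness of the eigenvectors to a reference Riesz basis, and it is carried out only in one space dimension precisely because this property is delicate for general $\Omega$ (multiplicities and Jordan blocks when $\lambda_+^M=\lambda_-^M$ or when two mode frequencies coincide, which the paper handles through an explicit case analysis you would also need).
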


\begin{rem}
By Proposition \ref{prop2}, the equivalent form of Theorem \ref{thm1}, one can see that
the condition \eqref{2} means that the two components of the solution are essentially both damped.
\end{rem}

The organization of the paper is as follows:
In Section 2, we give the well-posedness  of the system \eqref{wave} and reduce the original general problem equivalently into the same problem in two canonical forms
(see  Proposition \ref{prop2}).  Then, we prove  Proposition \ref{prop2} in Section 3
by three different approaches successively, that is,  Riesz basis approach, multiplier method, frequency domain approach.
Finally,  some useful extension and remarks are provided in Section 4.

 \section{Preliminaries}

 For the simplicity of statements, let
 $\mathcal{L}^2=\mathcal{L}^2(\Omega),\mathcal{H}^1_0=\mathcal{H}^1_0(\Omega),\mathcal{H}^2=\mathcal{H}^2(\Omega)$,
 and
 $\mathcal{H}=\mathcal{H}^1_0\times \mathcal{L}^2\times \mathcal{H}^1_0\times \mathcal{L}^2$.  Then
$\mathcal{H}$ is a complex Hilbert space equipped with the inner product
\be
\lab{inpro}
\langle U,\tilde{U} \rangle_\mathcal{H}=\int_\Omega \nabla  \overline{u} \cdot \nabla  \tilde{u}
+ \overline{y} \tilde{y} +\nabla \overline{v} \cdot \nabla \tilde{v}  + \overline{z} \tilde{z} \, dx.
\ee

Let $U=(u,y,v,z)$ and let $U_0=(u_0,u_1,v_0,v_1)$.
System \eqref{wave} can be rewritten as the Cauchy Problem in abstract form:
\be \lab{abseq}
  \f{dU}{dt}=\mathcal{A}U,  \quad U(0)=U_0.\ee
where $\mathcal{A}:\mathcal{D}(\mathcal{A})\rightarrow \mathcal{H}$ is defined by
 \be   \lab{3}
     \mathcal{A}U=(y,\D u-\alpha y-\beta z,z,\D v -\gamma y -\eta z)        \ee
 with $ \mathcal{D}(\mathcal{A}) = (\mathcal{H}^2\cap \mathcal{H}^1_0)\times \mathcal{H}^1_0\times
     (\mathcal{H}^2\cap \mathcal{H}^1_0)\times \mathcal{H}^1_0. $
 Obviously, $\mathcal{A}$ can be regarded as a bounded perturbation to the standard wave operator,
 namely, $\mathcal{A}=\mathcal{A}_0+\mathcal{A}_1$
 where  $\mathcal{A}_0U=(y,\D u,z,\D v )$ with $\mathcal{D}(\mathcal{A}_0)=\mathcal{D}(\mathcal{A})$
 and $ \mathcal{A}_1U=(0,-\alpha y-\beta z,0,-\gamma y-\eta z)$ with  $\mathcal{D}(\mathcal{A}_1)=\mathcal{H}$.

 By classical Hille-Yoshida Theorem and perturbation theory (see \cite[page 76, Theorem 1.1]{Pazy}), we have
 \begin{prop} \lab{prop1}
 $\mathcal{A}$ generates a $C_0$-semigroup $\{S(t)= e^{t \mathcal{A}}\}_{t \geq 0}$ on $\mathcal{H}$.
 If  $U_0=(u_0,u_1,v_0,v_1)\in \mathcal{H},$
 System \eqref{wave} has a unique solution $U(t) =S(t) U_0 \in C^0([0,+\infty);\mathcal{H}).$
If  $U_0 \in \mathcal{D}(\mathcal{A}),$
System  \eqref{wave} has a unique solution $U(t) =S(t) U_0 \in  C^0([0,+\infty); \mathcal{D}(\mathcal{A}))\cap C^1([0,+\infty);\mathcal{H}) .$
Moreover, $\|U(t)\|_{\mathcal{H}}^2 = 2E(t)$ for all $t\geq 0$.
\end{prop}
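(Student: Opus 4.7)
The plan is to exploit the decomposition $\mathcal{A} = \mathcal{A}_0 + \mathcal{A}_1$ given just above the proposition: verify that $\mathcal{A}_0$ generates a $C_0$-semigroup on $\mathcal{H}$ and that $\mathcal{A}_1$ is bounded on $\mathcal{H}$, then invoke the bounded perturbation theorem cited from Pazy. The energy identity will drop out by a direct comparison of the definitions.

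First, I would show that $\mathcal{A}_0$ generates a unitary $C_0$-group on $\mathcal{H}$. Integrating by parts in the inner product \eqref{inpro} and using the homogeneous Dirichlet boundary condition built into the domain, one checks that $\mathrm{Re}\,\langle \mathcal{A}_0 U, U\rangle_\mathcal{H} = 0$ on $\mathcal{D}(\mathcal{A}_0)$, so both $\mathcal{A}_0$ and $-\mathcal{A}_0$ are dissipative. The range condition $\mathrm{Range}(I - \mathcal{A}_0) = \mathcal{H}$ amounts, after eliminating $y$ and $z$, to solving two decoupled elliptic problems of the form $-\Delta u + u = g \in \mathcal{L}^2$ with $u \in \mathcal{H}^2 \cap \mathcal{H}^1_0$, which is classical by Lax--Milgram together with elliptic regularity. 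The Lumer--Phillips theorem then gives that $\mathcal{A}_0$ and $-\mathcal{A}_0$ both generate $C_0$-semigroups of contractions, i.e., $\mathcal{A}_0$ generates a unitary $C_0$-group on $\mathcal{H}$.

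Second, it is immediate that $\mathcal{A}_1$ is bounded on $\mathcal{H}$: its nonzero components involve only linear combinations of $y,z\in \mathcal{L}^2$, so
\[
\|\mathcal{A}_1 U\|_\mathcal{H}^2 \leq C(\alpha,\beta,\gamma,\eta)\bigl(\|y\|_{\mathcal{L}^2}^2 + \|z\|_{\mathcal{L}^2}^2\bigr) \leq C\,\|U\|_\mathcal{H}^2.
\]
The bounded perturbation theorem (Pazy, Theorem 1.1, p.~76) then ensures that $\mathcal{A} = \mathcal{A}_0 + \mathcal{A}_1$, with unchanged domain $\mathcal{D}(\mathcal{A}) = \mathcal{D}(\mathcal{A}_0)$, generates a $C_0$-semigroup $\{S(t)\}_{t\geq 0}$ on $\mathcal{H}$. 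Standard semigroup theory yields the claimed regularity: the mild solution $U(t) = S(t)U_0$ is in $C^0([0,+\infty);\mathcal{H})$ for every $U_0 \in \mathcal{H}$, and is a classical solution in $C^0([0,+\infty);\mathcal{D}(\mathcal{A}))\cap C^1([0,+\infty);\mathcal{H})$ when $U_0 \in \mathcal{D}(\mathcal{A})$, with uniqueness automatic.

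Finally, the identity $\|U(t)\|_\mathcal{H}^2 = 2E(t)$ follows by setting $y = u_t$ and $z = v_t$ in \eqref{inpro} and comparing with \eqref{wave-energy}. The only point requiring genuine care in the whole argument is the verification that $\mathcal{A}_0$ generates a $C_0$-semigroup on the chosen domain, but this reduces to the standard well-posedness theory for the scalar wave equation with Dirichlet data and presents no real obstacle; the coupling and damping are absorbed painlessly into the bounded perturbation $\mathcal{A}_1$.
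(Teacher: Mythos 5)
Your argument is correct and follows exactly the route the paper indicates: it decomposes $\mathcal{A}=\mathcal{A}_0+\mathcal{A}_1$, establishes generation for the conservative wave operator $\mathcal{A}_0$ (the paper cites Hille--Yosida, you use the equivalent Lumer--Phillips route), and absorbs the velocity coupling as a bounded perturbation via the cited theorem of Pazy. The paper states this proposition without writing out the details, so your proposal simply supplies the standard verification; nothing is missing.
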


For the sake of convenience of statement, we use real Schur decomposition for the coefficient  matrix
so that we can reduce the original general problem into the same problem in two canonical forms.

\begin{lem} \cite[page 79, Theorem 2.3.1]{GolubBook}
 \lab{lem1}
For any $B=\begin{pmatrix}  \alpha & \beta \\   \gamma & \eta \\  \end{pmatrix} \in \mathbb{R}^{2\times2} $,
there exists real orthogonal matrix $P \in  \mathbb{R}^{2\times2} $  such that
  $P^TBP=\tilde{B}$ where $\tilde{B}$ is in one of the following two canonical forms: \\
 i) $  \tilde{B}=\begin{pmatrix}  a & b \\
                                 -b & a \\ \end{pmatrix} $
         $\ (a,b \in \mathbb{R}, b\neq 0 ) $£» \\
 ii)  $ \tilde{B}=\begin{pmatrix}   a & 0 \\
                                                      b & c \\   \end{pmatrix} $
                                                      $\  (a,b,c \in \mathbb{R}) $. \\
  Moreover,  \eqref{2} is equivalent to $a>0$ in Case i) and   to $a>0$ and $c>0$ in Case ii).
 \end{lem}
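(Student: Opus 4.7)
The plan is to split the lemma into two independent tasks: constructing $P$ and $\tilde B$, and then translating the condition \eqref{2} into the positivity of the diagonal entries of $\tilde B$.

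For the first task I would invoke the real Schur decomposition \cite[Thm.~2.3.1]{GolubBook} specialized to $2\times 2$ matrices. Two cases arise according to the roots of the characteristic polynomial $\lambda^{2}-(\alpha+\eta)\lambda+(\alpha\eta-\beta\gamma)=0$. If the roots form a complex conjugate pair $a\pm\i b$ with $b\neq 0$, pick an eigenvector $w=w_{1}+\i w_{2}\in\C^{2}$ of $B$ associated to $a+\i b$; splitting $Bw=(a+\i b)w$ into real and imaginary parts yields $Bw_{1}=aw_{1}-bw_{2}$ and $Bw_{2}=bw_{1}+aw_{2}$, so that $B$ is represented in the basis $(w_{1},w_{2})$ by $\begin{pmatrix} a & b \\ -b & a \end{pmatrix}$; $\R$-independence of $w_{1},w_{2}$ is automatic, since otherwise $a+\i b$ would admit a real eigenvector, forcing $b=0$. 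A suitable orthonormalization of $(w_{1},w_{2})$ then yields the orthogonal $P$ of Case i). If the roots are both real, pick a unit real eigenvector $p_{1}$ for one of them, say $a$, extend it to an orthonormal basis $(p_{1},p_{2})$, and set $P=(p_{1}\ p_{2})$; then $P^{T}BP$ is lower-triangular with diagonal entries $(a,c)$, yielding Case ii). Repeated real eigenvalues (diagonalizable or not) are absorbed into Case ii) with no extra work.

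For the ``Moreover'' part I would use the invariance of trace and determinant under similarity:
\[
\alpha+\eta=\mathrm{tr}(B)=\mathrm{tr}(\tilde B),\qquad \alpha\eta-\beta\gamma=\det(B)=\det(\tilde B).
\]
In Case i) these identities give $\mathrm{tr}(\tilde B)=2a$ and $\det(\tilde B)=a^{2}+b^{2}$; since $b\neq 0$ makes $a^{2}+b^{2}>0$ automatic, condition \eqref{2} collapses to $a>0$. In Case ii) they give $\mathrm{tr}(\tilde B)=a+c$ and $\det(\tilde B)=ac$, so \eqref{2} reads $a+c>0$ together with $ac>0$, which forces $a$ and $c$ to be of the same sign with positive sum, hence both positive.

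The only step that needs more than routine bookkeeping is the orthonormalization in Case i): starting from the merely invertible basis $(w_{1},w_{2})$, one has to arrange the Gram--Schmidt step so that the resulting orthogonal change of coordinates preserves the antisymmetric off-diagonal shape (with $b$ possibly rescaled but of the same sign and still satisfying $a^{2}+b^{2}=\det B$). Beyond this I foresee no genuine obstacle: the decomposition is a standard textbook fact in dimension two, and the equivalence of conditions is elementary algebra.
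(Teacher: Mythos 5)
The paper offers no proof of this lemma at all --- it is imported verbatim from Golub--Van Loan --- so the only comparison possible is with the intended content. Your ``Moreover'' part is correct and surely the intended argument: trace and determinant are invariant under similarity, giving $2a>0$, $a^2+b^2>0$ in Case i) and $a+c>0$, $ac>0$ in Case ii), and the sign analysis is elementary. Your Case ii) construction (unit real eigenvector completed to an orthonormal basis) is also fine, up to the cosmetic point that it produces an upper--triangular matrix with the eigenvalue in the $(1,1)$ slot; one swaps the two basis vectors to match the lower--triangular form written in the lemma.

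The real problem is Case i), precisely at the step you flagged: the orthonormalization you describe cannot be arranged. Orthogonal conjugation respects the splitting of $B$ into its symmetric part $S=\f12(B+B^T)$ and skew part $K=\f12(B-B^T)$, and for $P\in O(2)$ one has $P^TKP=(\det P)\,K$. Hence if $P^TBP=aI+bJ$ with $J=\begin{pmatrix}0&1\\-1&0\end{pmatrix}$, then $B=aI+b(\det P)J$: the form i) is reachable by an \emph{orthogonal} similarity only from matrices already of that form. For instance $B=\begin{pmatrix}1&1\\-4&1\end{pmatrix}$ has eigenvalues $1\pm2\i$, so it has no real eigenvector (form ii) impossible) and is not $aI+bJ$ (form i) impossible). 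Concretely, writing $(w_1\ w_2)=QR$ and conjugating $aI+bJ$ by the triangular factor $R$ gives
\begin{equation*}
\begin{pmatrix} a-\tfrac{r_{12}}{r_{11}}b & \ \tfrac{r_{11}^2+r_{12}^2}{r_{11}r_{22}}b \\[2pt] -\tfrac{r_{22}}{r_{11}}b & \ a+\tfrac{r_{12}}{r_{11}}b \end{pmatrix},
\end{equation*}
which has the antisymmetric shape only when $w_1\perp w_2$ and $\|w_1\|=\|w_2\|$, i.e.\ when $S$ is already scalar. So ``$b$ rescaled but the shape preserved'' is false, and no other choice of orthogonal $P$ helps. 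What orthogonal similarity does always achieve in the complex--eigenvalue case is the weaker normal form $\begin{pmatrix} a & b\\ -c & a\end{pmatrix}$ with $bc>0$ (rotate so that the diagonal entries of $P^TSP$ are equal); alternatively, your basis $(w_1,w_2)$ gives form i) exactly but with a merely \emph{invertible} $P$ (the real Jordan form), which is all the paper actually needs since Lemma \ref{lem2} only requires equivalence of energies. Your write-up should adopt one of these two repairs explicitly; as it stands, the key step of Case i) fails.
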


 \begin{lem}
\lab{lem2}
Let $P \in \mathbb{R}^{2 \times 2}$ be a real orthogonal matrix and let $(u,v)$ be solution of  \eqref{wave},
then $(\tilde{u},\tilde{v})=(u,v)P$ is solution of
\be
 \lab{wave1}
  \left\{
 \begin{split}
  &\tilde{u}_{tt}- \D  \tilde{u}+\tilde{\alpha} \, \tilde{u}_{t}+\tilde{\beta} \, \tilde{v}_{t} =0 \quad\quad\  &\text{in } \ \Omega\times (0,+\infty),\\
  &\tilde{v}_{tt}- \D \tilde{v}+ \tilde{\gamma} \, \tilde{u}_{t}+\tilde{\eta} \, \tilde{v}_{t}=0\quad\quad\quad &\text{in } \ \Omega\times (0,+\infty),\\
  &\tilde{u}=\tilde{v}=0\quad\quad\quad\quad&\text{on} ~ \  \Gamma\times  (0,+\infty),\\
  &(\tilde{u},\tilde{v})(0)=(u^0,v^0)P , \quad (\tilde{u}_t,\tilde{v}_t)(0)=(u^1,v^1)P  \quad\quad&\text{in} ~\ \Omega \qquad\qquad\quad
  \end{split}
 \right.
 \ee
where
   $\tilde{B} = \begin{pmatrix}    \tilde{\alpha} & \tilde{\beta} \\
                                                               \tilde{\gamma} & \tilde{\eta} \\   \end{pmatrix}
                            = P^TBP$
       and the energy of    \eqref{wave1}
is equivalent to that of \eqref{wave}.
 \end{lem}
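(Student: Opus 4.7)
The plan is to view the problem as a pure change of variables. Let me write the system \eqref{wave} in compact matrix form. If I set the column vector $W=(u,v)^T$ and $B=\begin{pmatrix}\alpha & \beta\\ \gamma & \eta\end{pmatrix}$, then \eqref{wave} is equivalent to
\begin{equation*}
W_{tt}-\Delta W + B\,W_t = 0 \quad \text{in }\Omega\times(0,+\infty),
\end{equation*}
subject to $W=0$ on $\Gamma$ and $(W,W_t)(0)=(W^0,W^1)$. The identification $(\tilde u,\tilde v)=(u,v)P$ amounts, in column notation, to $\tilde W=P^T W$.

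I would then left-multiply the matrix equation by $P^T$ and insert $W=P\tilde W$ (using $PP^T=I$). Since $P$ has constant entries, it commutes with $\partial_t$ and $\Delta$, so
\begin{equation*}
P^T W_{tt}-P^T\Delta W + P^T B W_t = \tilde W_{tt}-\Delta \tilde W + (P^TBP)\tilde W_t = 0,
\end{equation*}
which is precisely system \eqref{wave1} with damping matrix $\tilde B = P^TBP$. The boundary condition $u=v=0$ on $\Gamma$ translates, component-wise, to $\tilde u=\tilde v=0$ on $\Gamma$, and the initial data transform as stated by applying $P$ from the right to the row vectors $(u^0,v^0)$ and $(u^1,v^1)$.

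For the energy comparison, I would exploit that $P$ is orthogonal. Pointwise in $(x,t)$, treating $W_t(x,t)\in\mathbb R^2$ as a vector, one has
\begin{equation*}
|\tilde W_t|^2 = (P^T W_t)^T (P^T W_t) = W_t^T P P^T W_t = |W_t|^2,
\end{equation*}
and similarly $|\nabla \tilde W|^2=|\nabla W|^2$ (applying the identity column by column to each partial derivative $\partial_{x_i}W$). Integrating over $\Omega$ gives $\tilde E(t)=E(t)$, which is even stronger than the equivalence claimed in the lemma.

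There is essentially no serious obstacle in this argument; the only subtlety is bookkeeping between the row convention $(\tilde u,\tilde v)=(u,v)P$ in the statement and the column convention $\tilde W=P^TW$ that makes the change of variables transparent. I would simply state this identification at the outset and then the computation runs linearly.
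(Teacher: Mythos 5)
Your proof is correct; the paper in fact states Lemma \ref{lem2} without proof, and your computation (passing to the column vector $\tilde W=P^TW$, conjugating the damping matrix to $P^TBP$, and using orthogonality of $P$ to get pointwise equality of $|\tilde W_t|^2+|\nabla\tilde W|^2$ with $|W_t|^2+|\nabla W|^2$) is exactly the routine verification the authors intend. Nothing is missing.
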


Thanks to Lemmas \ref{lem1}-\ref{lem2}, Theorem \ref{thm1} is equivalent to the following Proposition
\begin{prop}
\lab{prop2}
System \eqref{wave} is exponential stable if and only if  \\
  i)   $a>0$  when $B= \begin{pmatrix}
      a & b \\
      -b & a \\ \end{pmatrix} \ (a,b \in \mathbb{R}, b\neq 0 ) $;   \\
  ii) $a>0$ and $c>0$ when  $B= \begin{pmatrix}      a & 0 \\
      b & c \\ \end{pmatrix} \  (a,b,c \in \mathbb{R}) $.
\end{prop}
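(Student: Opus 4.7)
The plan is to observe that the matrix $\begin{pmatrix} a & b \\ -b & a \end{pmatrix}$ acts on $\R^2 \cong \C$ as multiplication by the complex scalar $a - \i b$. Setting $w := u + \i v$ (so that $w|_\Gamma = 0$), the two real equations combine into the single complex damped wave equation
\begin{equation*}
w_{tt} - \D w + (a - \i b)\, w_t = 0.
\end{equation*}
Expanding $w$ in the orthonormal basis of Dirichlet Laplacian eigenfunctions $\{\vph_n\}$ with eigenvalues $\{\mu_n\}$, each mode $w_n(t)$ satisfies $w_n'' + (a - \i b)\, w_n' + \mu_n w_n = 0$, whose characteristic roots
\begin{equation*}
r_n^{\pm} = \tfrac{1}{2}\bigl(-(a-\i b) \pm \sqrt{(a-\i b)^2 - 4\mu_n}\bigr)
\end{equation*}
satisfy $\mathrm{Re}(r_n^{\pm}) \to -a/2$ as $n \to \infty$. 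A finite check on the low modes then shows $\sup_n \mathrm{Re}(r_n^{\pm}) < 0$ iff $a > 0$. Combined with a Riesz basis property for the eigenvectors of $\mathcal{A}$ in $\mathcal{H}$ (inherited from the orthonormality of $\{\vph_n\}$ together with a bounded perturbation argument), this yields exponential stability precisely when $a > 0$.

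\textbf{Case ii).} Since the matrix is lower triangular, the equation for $u$ decouples into the classical damped wave equation $u_{tt} - \D u + a u_t = 0$, which is exponentially stable iff $a > 0$. Assuming $a > 0$, $v$ then solves the forced damped wave equation $v_{tt} - \D v + c v_t = -b u_t$ with an exponentially decaying source. When $c > 0$, the homogeneous $v$-semigroup is itself exponentially stable, and Duhamel's formula yields exponential decay of $v$ (with at worst a harmless polynomial correction at resonance). Conversely, if $c \leq 0$, choosing initial data with $u^0 = u^1 = 0$ reduces the system to an undamped or antidamped wave equation in $v$, which fails to decay; and $a \leq 0$ already rules out exponential decay of $u$ independently, so neither condition can be dropped.

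\textbf{Main obstacle and alternatives.} The delicate parts are making the spectral bound in Case i) uniform in $n$ while establishing the Riesz basis property rigorously, and controlling the resonant regime in Case ii) where the decay rates of $u$ and of the homogeneous $v$-equation coincide. A cleaner alternative is the frequency domain (Huang--Pr\"uss) criterion: it suffices to verify $\i\R \subset \rho(\mathcal{A})$ together with $\sup_{\beta \in \R} \|(\i\beta I - \mathcal{A})^{-1}\| < \infty$, both of which reduce in each canonical form to a direct resolvent energy estimate. The multiplier method gives a third route, producing an integrability-of-energy estimate $\int_0^{\infty} E(t)\,dt \lesssim E(0)$ through a suitably weighted combination of $u$- and $v$-multipliers.
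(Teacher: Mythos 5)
Your route is genuinely different from the paper's and, where it works, cleaner. For Case i) the paper never exploits the complex structure: it either computes all eigenvalues and eigenvectors explicitly in 1D and invokes Bari's theorem (quadratic closeness to a reference Riesz basis), or runs the multiplier/frequency-domain machinery on the $2\times2$ real system. Your observation that $\begin{pmatrix} a & b\\ -b& a\end{pmatrix}$ is multiplication by $a-\i b$, so that $w=u+\i v$ solves the single complex equation $w_{tt}-\D w+(a-\i b)w_t=0$, collapses the coupling entirely and — unlike the paper's Riesz basis argument — is not confined to one dimension, because the modal decomposition over the Dirichlet eigenfunctions $\{\vph_n\}$ is \emph{exact and orthogonal} (the damping is a scalar multiple of the identity). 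For Case ii) your cascade-plus-Duhamel argument is also not in the paper, which instead introduces the weighted energy $E_\kappa$ (multiplier method) or the weighted inner product $\langle\cdot,\cdot\rangle_{\mathcal H_\kappa}$ (frequency domain) to absorb the off-diagonal term $bu_t$; your version avoids choosing $\kappa$ at the price of the (harmless) factor $t$ at resonance. The necessity directions match the paper's.

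The one genuine soft spot is the sentence in Case i) asserting a Riesz basis property ``inherited from the orthonormality of $\{\vph_n\}$ together with a bounded perturbation argument.'' A bounded perturbation of a generator does not preserve Riesz bases of eigenvectors, nor the spectrum-determined growth condition, so as stated this step proves nothing; a negative spectral bound alone does not give exponential decay of a $C_0$-semigroup. What saves you is the block structure you already have: the phase space splits into the orthogonal, invariant two-(complex-)dimensional blocks spanned by $(\vph_n,0)$ and $(0,\vph_n)$, so exponential stability is equivalent to a bound $\sup_n\|e^{tA_n}\|\le Me^{-\omega t}$ for the $2\times2$ companion matrices $A_n$ in the modal energy norms. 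You must therefore check, in addition to $\sup_n\Re(r_n^\pm)<0$, that the condition numbers of the diagonalizing similarities are uniformly bounded in $n$; this holds because $r_n^+-r_n^-\sim 2\i\sqrt{\mu_n}$ and the modal eigenvectors become asymptotically orthonormal in the energy norm, while the finitely many remaining blocks decay individually (with at worst a factor $t$ from a Jordan block). Relatedly, ``a finite check on the low modes'' is not the right justification that $\Re(r_n^\pm)<0$ for \emph{every} $n$: the relevant fact is the algebraic identity $(a^2+b^2+4\mu)^2-(a^2-b^2-4\mu)^2-4a^2b^2=16a^2\mu>0$, which shows the roots of $\lambda^2+(a\pm\i b)\lambda+\mu=0$ have negative real part for every $\mu>0$ exactly when $a>0$ — this is in effect the computation the paper performs via its quantities $X_n$, $Y_n$. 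With these two points supplied, your argument is complete.
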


\section{Proof of  Proposition \ref{prop2}  }

\subsection{Riesz basis approach}
In this subsection,  we adopt Riesz basis approach to prove Proposition \ref{prop2} in one dimensional case, that is $\Omega= (0,\pi) \subset \R$.
The key ingredient is to prove the Riesz basis property and to analyze the spectrum of  $\mathcal{A}$.
Additionally, we obtain the optimal and explicit  decay rate in this situation, see Corollary \ref{cor}.

In Case  i),
System \eqref{wave} can be rewritten as the Cauchy Problem  \eqref{abseq} in one dimensional case,
where $\mathcal{A}:\mathcal{D}(\mathcal{A})\rightarrow \mathcal{H}$ is defined by
 \be   \lab{4}
    \mathcal{A}U=(y, u_{xx}-a y-b z,z, v_{xx} +b y -a z)       \ee
 with $ \mathcal{D}(\mathcal{A}) = (\mathcal{H}^2\cap \mathcal{H}^1_0)\times \mathcal{H}^1_0\times
     (\mathcal{H}^2\cap \mathcal{H}^1_0)\times \mathcal{H}^1_0.$

Let $\lambda$ be the eigenvalue of  $\mathcal{A}$ and $E=(u,y,v,z) \in \mathcal{D}(\mathcal{A})$ be its eigenvector:
   \beq\lab{suanzifangcheng}
      (\lambda I-\mathcal{A})E=0\eeq
It is equivalent to
\beq\left\{ \begin{split}
         & \lambda u-y = 0  \quad&\text{in} ~&\mathcal{H}_0^1 \\
           &(\lambda+a)y- u_{xx}+b z= 0  \quad&\text{in} ~&\mathcal{L}^2\\
           &\lambda v-z= 0    \quad&\text{in}~ &\mathcal{H}_0^1\\
          & (a+\lambda)z- v_{xx}-b y= 0 \quad&\text{in} ~&\mathcal{L}^2
        \end{split}
\right.\eeq
or further
\beq\left\{ \begin{split}
        &- u_{xx}+(\lambda+a)\lambda u+b\lambda v=0   \quad&\text{in} ~&\mathcal{L}^2  \\
          & - v_{xx}+(\lambda+a)\lambda v-b\lambda u=0   \quad&\text{in} ~&\mathcal{L}^2 \\
          & u(0)=u(\pi)=v(0)=v(\pi)=0 &
        \end{split}
\right.\eeq
Canceling $v$, we obtain a forth order ODE of $u$:
\be
\lab{sijie}
\begin{cases}
         u_{xxxx}-2\lambda(\lambda+a)u_{xx}+[(a+\lambda)^2+b^2]\lambda^2u=0 \\
           u(0)=u(\pi)=u_{xx}(0)=u_{xx}(\pi)=0
        \end{cases}
  \ee
whose  general solution is given by $u(x)=A_1e^{\nu_1x}+A_2e^{\nu_2x}+A_3e^{\nu_3x}+A_4e^{\nu_4x}$,
where $\nu_i \ (i=1,\cdots, 4)$  are four zeroes of the algebraic equation
  \be \nu^4-2(\lambda+a)\lambda\nu^2+[(a+\lambda)^2+b^2]\lambda^2=0
   \ee
and  $A_i\ (i=1,\cdots, 4)$ are constants to be determined.
 Then it follows that
 \beq  u(x)=\f{ \sin nx  }{n}, \ \nu=\pm  \i n   \quad (n\in \Z^+)
  \eeq
and consequently
 \beq v(x)= \frac{\lambda^2+a \lambda +n^2}{-b\lambda} u(x),  \ y(x)=\lambda u(x), \ z(x)=\lambda v(x) \eeq
 where the eigenvalue $\lambda$ satisfies the characteristic equation
\beq \lab{sp}
   \lambda^4+ 2a \lambda^3+(2n^2+a^2+b^2)\lambda^2+2a n^2\lambda+n^4=0 \eeq
 or equivalently
 \beq
  (\lambda^2+(a- \i b)\lambda+n^2)(\lambda^2+(a+\i b)\lambda+n^2)=0 \eeq
Consequently, there are two classes of eigenvalues:
\be
\lab{biao1}
          \lambda_{1, n}^\pm=(\pm X_n -\f{a}{2}) - \i(\pm Y_n -\f{b}{2}), \quad
     \lambda_{2, n}^\pm=(\pm X_n-\f{a}{2})+ \i ( \pm Y_n - \f{b}{2})
      \ee
where
  \be \lab{XnYn} X_n =\sqrt{\f{\sqrt{(a^2-b^2-4n^2)^2+4a^2b^2}+a^2-b^2-4n^2}{2}},
                \quad Y_n =\f{ab}{X_n} \ee
 yielding that  $X_n$ is a decreasing function of  $n\in \Z ^+$.
It follows that
\begin{align}
 \lab{lam1}
&  \max  \limits_{n\in \Z^+}  \{ \Re(\lambda_{1, n}^\pm),  \Re (\lambda_{2, n}^\pm ) \} =  \Re(\lambda_{1, 1}^+) = \Re(\lambda_{2, 1}^+) =X_1 -\f{a}{2}
 \\
\lab{jianjin}
 & \lambda_{1, n}^\pm \sim \pm \i n,\  \lambda_{2, n}^\pm \sim \pm \i n, \quad \text{\ as\ } n\rightarrow +\infty
\end{align}
The corresponding eigenvectors are
\beq E_{1, n}^\pm=\sin nx  (\f{1}{n},\f{\lambda_{1,n}^\pm}{n},\f{-\i}{n},\f{- \i \lambda_{1,n}^\pm}{n} ) ,\quad
 E_{2, n}^\pm=\sin nx   (\f{1}{n},\f{\lambda_{2,n}^\pm}{n},\f{\i}{n},\f{\i \lambda_{2,n}^\pm}{n} )  \eeq

If $a \leq 0$, there exists an eigenvalue with nonnegative real part  according to  \eqref{biao1}-\eqref{XnYn}.
Then by choosing the corresponding eigenvector as the initial data,
  it is easy to conclude that  System \eqref{wave} is unstable.  If $a>0$, we have
\begin{prop} \label{prop3}
For any given $a>0,b \in \R,b\neq0 $, $\{ (E_{1,n}^+,E_{1,n}^-,E_{2,n}^+,E_{2,n}^-) \} _{n\in \Z^+}$ forms a Riesz basis of the Hilbert space $\mathcal{H}$.
\end{prop}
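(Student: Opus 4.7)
The idea is to use that every eigenvector $E_{j,n}^\pm$ lives in the four-dimensional subspace $V_n\subset\mathcal H$ of vectors of the form $\sin(nx)\,(\xi_1,\xi_2,\xi_3,\xi_4)$, and to reduce the Riesz basis property on $\mathcal H$ to a uniform $4\times 4$ invertibility estimate. Since $\{\sin(nx)\}_{n\geq 1}$ is orthogonal in both $\mathcal L^2(0,\pi)$ and $\mathcal H_0^1(0,\pi)$, one has the orthogonal decomposition $\mathcal H=\bigoplus_{n\geq 1}V_n$, and a direct calculation from \eqref{inpro} gives $\|E_{j,n}^\pm\|_{\mathcal H}^2=\pi(1+|\lambda_{j,n}^\pm|^2/n^2)$, which is bounded above and below uniformly in $n$ by \eqref{jianjin}. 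Fixing an orthonormal basis of each $V_n$ built from $\sin(nx)e_k$ (properly rescaled by $n$ in the $\mathcal H_0^1$ slots), I let $M_n\in\mathbb C^{4\times 4}$ denote the coordinate matrix whose columns are $E_{1,n}^+,E_{1,n}^-,E_{2,n}^+,E_{2,n}^-$. Then $\|M_n\|$ is uniformly bounded, and the Riesz basis property on $\mathcal H$ will follow from $\inf_n\sigma_{\min}(M_n)>0$, since an orthogonal Hilbert sum of bases with uniform condition numbers is itself a Riesz basis of the direct sum.

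To establish the uniform lower bound I would proceed in two steps. For every fixed $n\geq 1$, the hypotheses $a>0$ and $b\neq 0$ force $X_n>0$ and $Y_n\neq 0$ via \eqref{XnYn}, and from the explicit expressions \eqref{biao1} one verifies that the four eigenvalues $\lambda_{1,n}^\pm,\lambda_{2,n}^\pm$ are pairwise distinct: the $\pm$ within each pair is distinguished by the sign of $X_n$, while the only candidate cross-pair coincidences $\lambda_{1,n}^\varepsilon=\lambda_{2,n}^\varepsilon$ are ruled out by a short algebraic computation that reduces to an impossible positive identity under $a>0$. Since distinct eigenvalues of $\mathcal A$ produce linearly independent eigenvectors, this gives $\det M_n\neq 0$ for every $n$. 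For the asymptotic regime, the relations $\lambda_{j,n}^\pm/n\to\pm i$ from \eqref{jianjin} show that $M_n$ converges to an explicit constant matrix $M_\infty$ with entries in $\{\pm 1,\pm i\}$, whose determinant I would compute directly and verify to be nonzero. By continuity $|\det M_n|$ is then bounded below uniformly on $\{n\geq N\}$ for some $N$, and the remaining finitely many indices contribute a strictly positive lower bound by the individual nonvanishing, yielding $\inf_n|\det M_n|>0$.

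The hard part will be verifying $\det M_\infty\neq 0$: the four limits $\lambda_{j,n}^\pm/n$ collapse onto just the two values $\pm i$, each attained twice, so linear independence of the four limit columns is not automatic and must rely on the extra sign pattern in the third and fourth components that distinguishes $j=1$ from $j=2$. This has to be confirmed by an explicit $4\times 4$ determinant calculation; once it is in hand, assembling the uniform estimates with the orthogonal decomposition $\mathcal H=\bigoplus V_n$ immediately yields the Riesz basis property of $\{(E_{1,n}^+,E_{1,n}^-,E_{2,n}^+,E_{2,n}^-)\}_{n\geq 1}$.
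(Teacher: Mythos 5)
Your argument is correct, but it follows a genuinely different route from the paper. The paper introduces the explicit family $e_{1,n}^{\pm}=\sin nx\,(\tfrac1n,\pm\i,-\tfrac{\i}{n},\pm1)$, $e_{2,n}^{\pm}=\sin nx\,(\tfrac1n,\pm\i,\tfrac{\i}{n},\mp1)$, which is an orthogonal Riesz basis of $\mathcal H$, checks via \eqref{jianjin} that $\|E_{i,n}^{\pm}-e_{i,n}^{\pm}\|_{\mathcal H}^2=O(1/n^2)$ so that the differences are square-summable, and invokes Bari's theorem (Lemma \ref{Riesz}). You instead exploit the orthogonal block decomposition $\mathcal H=\bigoplus_n V_n$ and reduce everything to two-sided uniform bounds on the $4\times4$ coordinate matrices $M_n$, realizing the family as the image of an orthonormal basis under a block-diagonal bounded invertible operator. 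Your route is self-contained (no Bari), needs only $M_n\to M_\infty$ invertible rather than a quantitative $O(1/n^2)$ closeness, and delivers completeness for free --- a point the paper's appeal to Lemma \ref{Riesz} glosses over, since that lemma as stated presupposes $\{E_n\}$ is already a basis while the proof only verifies linear independence. The paper's route is shorter once Bari's theorem is granted and does not require the perturbation to respect a block structure. Two small remarks: the step you flag as ``the hard part,'' namely $\det M_\infty\neq0$, is in fact immediate, because the four limit columns $(1,\pm\i,-\i,\pm1)$ and $(1,\pm\i,\i,\mp1)$ are mutually orthogonal in $\C^4$ (this is exactly the orthogonality of the $e_{i,n}^{\pm}$ that the paper records), so $M_\infty^*M_\infty=4I$; and the passage from $\inf_n|\det M_n|>0$ together with $\sup_n\|M_n\|<\infty$ to $\inf_n\sigma_{\min}(M_n)>0$ should be stated explicitly via $\sigma_{\min}(M_n)\geq|\det M_n|\,\|M_n\|^{-3}$. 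Your verification that the four eigenvalues are distinct for each fixed $n$ can also be shortened: a common root of $\lambda^2+(a-\i b)\lambda+n^2=0$ and $\lambda^2+(a+\i b)\lambda+n^2=0$ would have to satisfy $2\i b\lambda=0$, hence $\lambda=0$, which is excluded since $n^2\neq0$.
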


\begin{proof}
Since $a>0,b\neq0$,   \eqref{wave} has no multiple eigenvalues, thus $\{ (E_{1,n}^+,E_{1,n}^-,E_{2,n}^+,E_{2,n}^-) \} _{n\in \Z^+}$ are linearly independent.
Note that
  \beq  e_{1,n}^\pm=\sin nx \, (\f{1}{n},\pm \i,-\f{\i}{n},\pm 1) ,\quad
        e_{2,n}^\pm=\sin nx  \, (\f{1}{n},\pm \i,\f{\i}{n},\mp 1) \quad (n\in \Z^+)
        \eeq
forms  a Risez basis   of $\mathcal{H}$ and
\begin{align*}
&  \langle e_{i,n}^+,e_{j,m}^+ \rangle_{\mathcal{H}}= \langle e_{i,n}^+,e_{j,m}^-\rangle_{\mathcal{H}}= \langle e_{i,n}^-,e_{j,m}^-\rangle_{\mathcal{H}}=0,
     \quad \forall n, m\in \mathbb{Z}^+,n\neq m,i,j=1,2
     \\
& \langle e_{1,n}^+,e_{2,n}^+ \rangle_{\mathcal{H}}=\langle e_{1,n}^+,e_{2,n}^- \rangle_{\mathcal{H}}
     =\langle e_{1,n}^-,e_{2,n}^+ \rangle_{\mathcal{H}}=\langle e_{1,n}^-,e_{2,n}^-\rangle_{\mathcal{H}}=0,
    \quad \forall n\in \mathbb{Z}^+
\end{align*}
Then using the asymptotic expansion of the eigenvalues \eqref{jianjin}, we get
 \beq   || e_{i,n}^+-E_{i,n}^+||_{\mathcal{H}}^2+|| e_{i,n}^--E_{i,n}^-||_{\mathcal{H}}^2 = O_{\mathcal{H}}(\f{1}{n^2})  \quad  i=1,2 \eeq
  and thus
   \beq \sum \limits_{n\in\Z^+} \mathop{\sum}\limits_{i={1,2}}
      \big( || e_{i,n}^+-E_{i,n}^+||_{\mathcal{H}}^2+|| e_{i,n}^--E_{i,n}^-||_{\mathcal{H}}^2 \big) <+\infty  \eeq
By Lemma  \ref{Riesz},
$\{ (E_{1,n}^+,E_{1,n}^-,E_{2,n}^+,E_{2,n}^-) \} _{n\in \Z^+}$ forms a Riesz basis of $\mathcal{H}$.
\end{proof}

Thanks to Proposition \ref{prop3}, for any given $U_0\in \mathcal{H}$,
there exist  $\{(\alpha_{1,n}^+,\alpha_{1,n}^-,\alpha_{2,n}^+,\alpha_{2,n}^-) \}_{n \in \Z^+}$
$\subset \C^4$ such that
  \be \lab{inidata} U_0=\mathop{\sum}_{n\in \Z^+}\mathop{\sum}_{i=1,2}  \big( \alpha_{i,n}^+E_{i,n}^+ +\alpha_{i,n}^-E_{i,n}^- \big)  \ee
and
 \be \lab{con1}
 E(0) =2 \|U_0\|^2_{\mathcal{H}}  \sim  \mathop{\sum}_{n\in \Z^+} \mathop{\sum}_{i=1,2}
  \big(  \big|\alpha_{i,n}^+\big|^2+ \big|\alpha_{i,n}^-\big|^2 \big) <+\infty
   \ee
Moreover, the solution of  \eqref{abseq} writes
   \be \lab{sol1}
     U(t)=\mathop{\sum}_{n\in \Z^+}\mathop{\sum}_{i=1,2}
     \big( \alpha_{i,n}^+  e^{\lambda_{i,n}^+t} E_{i,n}^+ +\alpha_{i,n}^- e^{\lambda_{i,n}^-t} E_{i,n}^- \big)  \ee

%

%

 In Case ii),  consider the eigenvalue and eigenfunction:
 $(\lambda I-\mathcal{A})E=0,\  E\in \mathcal{D}(\mathcal{A})$, namely
    \beq (y, u_{xx}-ay,z, v_{xx}-by-cz)=\lambda(u,y,v,z)  \in \mathcal{H} \eeq
 Then the eigenvalue satisfies
  \beq   \lambda^4+(a+c)\lambda^3+(2n^2+ac)\lambda^2+(a+c)n^2\lambda+n^4=0   \eeq
or equivalently,
   \beq (\lambda^2+a\lambda+n^2)(\lambda^2+c\lambda+n^2)=0 \eeq
thus
\be
\lab{biao3}
\lambda_{1, n}^\pm=
\begin{cases}
    \displaystyle       \frac{-a \pm \sqrt{a^2-4n^2}}{2},   \text{\ if \ }n\leq\f{a}{2}, \\
    \displaystyle          \frac{-a  \pm \i \sqrt{4n^2-a^2}}{2},    \text{\ if \ } n>\f{a}{2},  \\
\end{cases}
\lambda_{2, n}^\pm=
\begin{cases}
    \displaystyle    \frac{-c  \pm \sqrt{c^2-4n^2}}{2},    \text{\ if \ }n\leq\f{c}{2}, \\
   \displaystyle    \frac{-c \pm \i \sqrt{4n^2-c^2}}{2},   \text{\ if \ } n>\f{c}{2} ,
\end{cases}
    \ee
Clearly
\begin{align}
 \lab{lam1-2}
&  \max  \limits_{n\in \Z^+}   \Re(\lambda_{i, n}^\pm)  =  \Re(\lambda_{i, 1}^+)  \quad i=1,2
 \\
\lab{jianjin2}
 & \lambda_{1, n}^\pm \sim \pm \i n,\  \lambda_{2, n}^\pm \sim \pm \i n, \quad \text{\ as\ } n\rightarrow +\infty
\end{align}
Therefore all the eigenvalues
$\{\lambda_{1,n}^\pm,\lambda_{2,n} ^\pm\}_{n\in \Z^+}$ have negative real parts if and only if
$a>0$ and $c>0$. Certainly if $a \leq 0$ or $c \leq 0$, System \eqref{wave} is not asymptotically stable.

When $a>0$ and $c>0$, we would like to prove that the eigenvectors as well as root-vectors form a Riesz basis of $\mathcal{H}$.
For this purpose, we discuss the different situations that multiple eigenvalues may appear with various values of $a$ and $c$.

$\bullet$ Case 1. $a \not \in 2\mathbb{Z}^+, c \not \in 2\mathbb{Z}^+$ and $a \neq c$.
In this case,$\lambda_{1,n}^+$,$\lambda_{1,n}^-$,$\lambda_{2,n}^+$,$\lambda_{2,n}^-$ are distinct and  System \eqref{wave} is decoupled.
The eigenvectors corresponding to $\lambda_{1,n}^\pm,\lambda_{2,n}^\pm \ (n\in \Z^+)$ are
 \beq  E_{1,n}^\pm=\sin nx \, (\f{1}{n},\f{\lambda_{1,n}^\pm }{n} ,\f{b}{(a-c)n},\f{\lambda_{1,n}^\pm b}{(a-c)n}), \quad
 E_{2,n}^\pm=\sin nx  \, (0,0,\f{1}{n},\f{\lambda_{2,n}^\pm}{n})
 \eeq
Comparing $\{E_{1,n}^\pm, E_{1,n}^\pm\}$ with  a Riesz basis of $\mathcal{H}$:
 \beq  e_{1, n}^\pm=\sin nx \, (\f{1}{n},\pm \i, \f{b}{(a-c) n},  \f{ \pm \i b}{a-c} ) ,
  \quad e_{2, n}^\pm=\sin nx \, (0,0,\f{1}{n}, \pm \i )
  \eeq
yields by \eqref{jianjin2} and Lemma \ref{Riesz} that
$\{E_{1,n}^+,E_{1,n}^-,E_{2,n}^+,E_{2,n}^-\}_{n \in \Z^+ }$ forms a Riesz basis of $\mathcal{H}$.
Therefore, every initial data $U_0\in \mathcal{H}$ can be expanded as  \eqref{inidata} with \eqref{con1} and the corresponding solution is given by \eqref{sol1}.

 $\bullet$ Case 2.   $a \in 2\mathbb{Z}^+$ or  $c \in 2\mathbb{Z}^+$ and $a\neq c$.  There are three sub-cases.

 $\circ$ Case 2.1.  $a=2m \neq c=2k,\ m,k\in \Z^+$.   System \eqref{wave} has two multiple  eigenvalues:
 $\lambda_{1,m}^\pm =-\f{a}{2}=-m$  and $\lambda_{2,k}^\pm=-\f{c}{2}=-k$.
 The dimension of the eigen space corresponding to  $\lambda_{1,m}^\pm$ or $\lambda_{2,k}^\pm$ is one.
The corresponding eigen function corresponding to $\lambda_{1,m}^\pm$ is
              \beq E_{1,m}^+=\sin mx\, (\f{1}{m},-1,\f{b}{(a-c)m},\f{- b}{a-c}) \eeq
Solving $(\mathcal{A}-\lambda_{1, m}^+I)E_{1, m}^-=E_{1, m}^+$, we obtain the root vector
            \beq E_{1,m}^-=\f{\sin mx}{2m} \, (\f{1}{m},1,\f{b}{(a-c)m},\f{b}{a-c})  \eeq
which is linearly independent of   $E_{1,m}^{+}$.  Similarly , we calculate the eigenvector and root vector of  $\lambda_{2,k}^+=-k$:
\beq E_{2,k}^+=\sin kx\, (0,0,\f{1}{k},-1), \quad
              E_{2,k}^-=\f{\sin kx}{2k}\, (0,0,\f{1}{k},1) \eeq

By Lemma \ref{Riesz}, $\{E_{1,n}^+,E_{1,n}^-,E_{2,n}^+,E_{2,n}^-\}_{n \in \Z^+ }$ forms a Risez basis of $\mathcal{H}$.
For every $U_0\in \mathcal{H}$ given by \eqref{inidata} with \eqref{con1},
the solution of \eqref{abseq} is
\be\lab{sol2}
\begin{split}U(t)= \mathop{\sum}_{n\in \Z^+} \mathop{\sum}\limits_{i={1,2}}    \big(\alpha_{i,n}^+ e^{\lambda_{i,n}^+t} E_{i,n}^+
   +\alpha_{i,n}^-  e^{\lambda_{i,n}^-t} E_{i,n}^- \big)
   + \alpha_{1,m}^-  t e^{\lambda_{1,m}^+ t} E_{1,m}^+ + \alpha_{2,k} ^- t e^{\lambda_{2,k}^-t} E_{2,k}^+
\end{split}
\ee

 $\circ$ Case 2.2.   $a=2m, m\in\Z^+$ and $c \not\in2 \Z^+$.
 $\mathcal{A}$ has one multiple eigenvalue $\lambda_{1,m}^\pm =-\f{a}{2}=-m$.
 For every $U_0\in \mathcal{H}$ given by \eqref{inidata} with \eqref{con1},
the solution of \eqref{abseq} is
 \be\lab{sol3}
\begin{split}U(t)= \mathop{\sum}_{n\in \Z^+}\mathop{\sum}_{i=1,2}
  \big( \alpha_{i,n}^+ e^{\lambda_{i,n}^+t}E_{i,n}^+ +\alpha_{i,n}^-  e^{\lambda_{i,n}^-t} E_{i,n}^- \big)
  +\alpha_{1,m}^- t  e^{\lambda_{1,m}^+t } E_{1,m}^+
\end{split}
\ee

$\circ$ Case 2.3.  $c=2k,k\in\Z^+$ and $a\not\in2 \Z^+$.
$\mathcal{A}$ has one multiple eigenvalue $\lambda_{2,k}^\pm= -\f{c}{2}=-k$.
Similar to Case 2.2,
 for every $U_0\in \mathcal{H}$ given by \eqref{inidata} with \eqref{con1},
the solution of \eqref{abseq} is
\be \lab{sol4}
\begin{split}U(t)=\mathop{\sum}_{n\in \Z^+}\mathop{\sum}_{i=1,2}
\big( \alpha_{i,n}^+ e^{\lambda_{i,n}^+t} E_{i,n}^+ +\alpha_{i,n}^- e^{\lambda_{i,n}^-t} E_{i,n}^- \big)
  +\alpha_{2,k}^-  t e^{\lambda_{2,k}^+t} E_{2,k}^+
\end{split}
\ee

$\bullet$ Case   3.   $a=c \not \in 2\mathbb{Z}^+$.
The two classes of eigenvalues coincide $\lambda_{1,n}^+=\lambda_{2,n}^+$, $\lambda_{1,n}^-=\lambda_{2,n}^-$,
and the dimension of eigenspace   varies with respect to $b$.

$\circ$  Case 3.1.  $b=0$. System \eqref{wave} is decoupled.
  The dimension of eigenspace of $\lambda_{1,n}^{\pm}$ is two.
The eigenvectors are
    \beq   E_{1, n}^\pm= \sin nx \, (\f{1}{n},\f{\lambda_{1, n}^\pm}{n},0,0),
     \quad E_{2, n}^\pm= \sin nx \, (0,0,\f{1}{n},\f{\lambda_{1, n}^\pm}{n} )
     \eeq
  which forms a Riesz basis of $\mathcal{H}$.
For every $U_0\in \mathcal{H}$ given by \eqref{inidata} with \eqref{con1},
the solution of \eqref{abseq} is still \eqref{sol1}.

$\circ$ Case 3.2. $b\neq 0$.
 The dimension of eigenspase of $\lambda_{1,n}^+, \lambda_{1,n}^-$ is one.
 The eigenvector and root vector are
 \begin{align*}
  & E_{1,n}^\pm=\sin nx \, (0,0,\f{1}{n},\f{\lambda_{1,n}^\pm}{n}),
   &  E_{2, n}^\pm= - \f{\sin nx}{n} \, (\f{a+ 2\lambda_{1, n}^\pm}{b \lambda_{1, n}^\pm},\f{a+2\lambda_{1, n}^\pm}{b},\f{1}{ 2\lambda_{1, n}^\pm},-\f{1}{2})
              \end{align*}
  which forms a Riesz basis of $\mathcal{H}$.
For every $U_0\in \mathcal{H}$ given by \eqref{inidata} with \eqref{con1},
the solution of \eqref{abseq} is
    \be\lab{sol5}
  \begin{split}U(t)=\mathop{\sum}_{n\in \Z^+} \Big(\mathop{\sum}_{i=1,2}
    \alpha_{i,n}^+  e^{\lambda_{i,n}^+t} E_{i,n}^+ +\alpha_{i,n}^- e^{\lambda_{i,n}^-t} E_{i,n}^-
     + \alpha_{2,n}^+ t e^{\lambda_{1,n}^+t} E_{1,n}^+ + \alpha_{2,n}^- te^{\lambda_{1,n}^-t} E_{1,n}^-  \Big)
  \end{split}
  \ee

 $\bullet$ Case  4.  $a=c=2m, m\in 2\mathbb{Z}$.  Then $\lambda_{1, n}^+=\lambda_{2, n}^+,  \lambda_{1, n}^-=\lambda_{2, n}^-$ for all $n\in \Z^+$.

 $\circ$ Case 4.1 $b= 0$.
 System \eqref{wave} is decoupled.  For $n\neq m$,  the algebric degree of the eigenvalue  $\lambda_{1,n}^\pm$ is two.
 The dimension of eigenspase of $\lambda_{1,n}^{\pm}$ is two and the eigenvectors are
   \beq \label{34} E_{1, n}^\pm= \sin nx \, (\f{1}{n},\f{\lambda_{1, n}^\pm}{n},0,0),\quad
   E_{2, n}^\pm= \sin nx\,  (0,0,\f{1}{n},\f{\lambda_{2, n}^\pm}{n} )
   \eeq
For $n=m$,  the algebric degree of the eigenvalue  $\lambda_{1,m}^+$ is four. Actually $\lambda_{1, m}^\pm=\lambda_{2,m }^\pm=-m$,
and the  dimension of its eigenspase is two.
 The eigenvectors and root eigenvectors are
 \begin{align*}
   &E_{1,m}^+=\sin mx\, (\f{1}{m},-1,0,0),\quad
   E_{2,m}^+=\sin mx\, (0,0,\f{1}{m},-1)
    \\
  & E_{1, m}^-=\f{\sin mx}{2m} \,  (\f{1}{m},1,0,0),
  \quad E_{2, m}^-=\f{\sin mx}{2m} \, (0,0,\f{1}{m},1)
  \end{align*}
For every $U_0\in \mathcal{H}$ given by \eqref{inidata} with \eqref{con1},
the solution of \eqref{abseq} is
  \be \lab{sol6}
\begin{split}U(t)=\mathop{\sum}_{n\in \Z^+}\mathop{\sum}_{i=1,2}
\big( \alpha_{i,n}^+  e^{\lambda_{i,n}^+t} E_{i,n}^+ +\alpha_{i,n}^- e^{\lambda_{i,n}^-t} E_{i,n}^- \big)
+ \alpha_{2,m}^ +  t e^{\lambda_{1,m}^+t} E_{1,m}^+ + \alpha_{2,m}^- t e^{\lambda_{1,m}^-t} E_{2,m}^-
\end{split}
\ee

  $\circ$   Case 4.2  $b\neq 0$.   For $n\neq m$,  the eigenvalues  $\lambda_{1,n}^\pm$ are the same as in Case 3.2.
 While for $n=m$, the dimension of the eigenspace  of the eigenvalue $\lambda_{1,m}^\pm=\lambda_{2,m}^\pm =-m$ is one.
 We could solve successively the eigenvalue, first order, second order and the third order  root vector:
 \begin{align*}
   & E_{1,m}^+=\sin mx \, (0,0,\f{1}{m},-1), \quad
   E_{1,m}^-=\f{\sin mx}{2m} \, (0,0,\f{1}{m},1), \\
  &    E_{2, m}^+=\f{\sin mx}{m} \, (\f{1}{b m},\f{-1}{b},\f{1}{2 m^2},0), \quad
    E_{2,m}^-=\f{\sin mx}{m^2} \, (\f{3}{2b m},\f{-1}{2b},\f{1}{2m^2},0).
\end{align*}
For every $U_0\in \mathcal{H}$ given by \eqref{inidata} with \eqref{con1},
the solution of \eqref{abseq} is
  \be \lab{sol7}
   \begin{split}
   U(t)&=\mathop{\sum}_{n\in \Z^+}\mathop{\sum}_{i=1,2}
     \big( \alpha_{i,n}^+ e^{\lambda_{i,n}^+t}E_{i,n}^+ +\alpha_{i,n}^- e^{\lambda_{i,n}^-t}E_{i,n}^- \big)  \\
     &\quad +\mathop{\sum}_{n\neq m} \big( \alpha_{2,n}^+ t e^{\lambda_{1,n}^+t} E_{1,n}^+ + \alpha_{2,n}^- te^{\lambda_{1,n}^-t} E_{1,n}^-\big)\\
     &\quad + \alpha_{2,m}^-  e^{\lambda_{1,m}t}  (t E_{2,m}^-+    \f{t^2}{2!} E_{1,m}^-+\f{t^3}{3!} E_{1,m}^+) \\
     &\quad + \alpha_{2,m}^+  e^{\lambda_{1,m}t}  (t E_{1,m}^-+\f{t^2}{2!} E_{1,m}^+)\\
     &\quad + \alpha_{1,m}^- t e^{\lambda_{1,m}t} E_{1,m}^+
\end{split}
\ee
Above all, we easily conclude by \e{con1},\e{sol1}, \e{sol2},\e{sol3},\e{sol4},\e{sol5},\e{sol6},\e{sol7} that
\be  \lab{decay1} E(t) =2\|U(t)\|^2_{\mathcal{H}} \leq M E(0) e^{-\omega t}, \quad \forall t\in [0, +\infty)
\ee
for some positive constants $M,\omega$ independent of the initial data.
This is the end of Proof of Proposition \ref{prop2}. \qed

From the above proof of Proposition \ref{prop2},  we get the corollary concerning the decay rate.
 \begin{cor}
 \lab{cor}
  i)   If  $B= \begin{pmatrix}
      a & b \\
      -b & a \\ \end{pmatrix} \ (a>0, b \in \mathbb{R}, b\neq 0 ) $;
      then one has \eqref{decay1}
      with the optimal decay rate  $\omega = -2\Re(\lambda_{1, 1}^+)=-2\Re(\lambda_{2, 1}^+) $.
  ii) If  $B= \begin{pmatrix}      a & 0 \\
      b & c \\ \end{pmatrix} \  (a>0, b\in \mathbb{R}, c>0) $,
      then one has
   \be  \label{decay2} E(t) =2\|U(t)\|^2_{\mathcal{H}} \leq M E(0) t^p e^{-\omega t}, \quad \forall t\in [0, +\infty).
    \ee
       with the optimal decay rate  $\omega = -2\max\{ \Re(\lambda_{1, 1}^+), \Re(\lambda_{2, 1}^+)\} $
      and
      \be p=
\begin{cases} 1,  \quad \text{\ if\quad} a=c=2 \text{\ and\ } b= 0\text{\quad or \ }a=2 \neq c \text{\quad or \ } a \neq 2 = c ;
   \\
   3,  \quad \text{\ if\quad } a=c=2 \text{\ and\ } b\neq 0;
   \\
   0, \quad \text{\ else.}
   \end{cases}
\ee
 \end{cor}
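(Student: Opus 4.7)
The plan is to read off both decay estimates directly from the Riesz basis expansion formulas obtained in the proof of Proposition~\ref{prop2}, since those formulas already express every solution as a (possibly polynomial-weighted) sum over the spectrum.

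For part~i), with $B=\begin{pmatrix}a & b\\-b & a\end{pmatrix}$ and $a>0, b\neq 0$, all eigenvalues are simple by the argument leading to Proposition~\ref{prop3}. Combining the expansion \eqref{sol1} with the norm equivalence afforded by the Riesz basis and pulling out the worst exponential factor, one gets
\begin{equation*}
\|U(t)\|_{\mathcal H}^2 \;\le\; C \sum_{n\in\Z^+}\sum_{i=1,2}\bigl(|\alpha_{i,n}^+|^2 e^{2\Re(\lambda_{i,n}^+)t}+|\alpha_{i,n}^-|^2 e^{2\Re(\lambda_{i,n}^-)t}\bigr)\;\le\; C\, e^{2\Re(\lambda_{1,1}^+) t}\, E(0),
\end{equation*}
by \eqref{lam1}. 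This gives \eqref{decay1} with $\omega=-2\Re(\lambda_{1,1}^+)$. For optimality one tests with $U_0=E_{1,1}^+$: the exact solution $e^{\lambda_{1,1}^+t}E_{1,1}^+$ realizes the bound, so $\omega$ cannot be enlarged.

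For part~ii) the strategy is identical, but the Riesz basis expansion must be selected from \eqref{sol1}--\eqref{sol7} according to which of the sub-cases 1, 2.1--2.3, 3.1--3.2, 4.1--4.2 we are in. The dominant exponential rate is again $-2\max\{\Re(\lambda_{1,1}^+),\Re(\lambda_{2,1}^+)\}$ by \eqref{lam1-2}. The extra factor $t^p$ comes from generalized-eigenvector terms of the form $t^k e^{\lambda t}$ appearing in \eqref{sol2}--\eqref{sol7} whenever a dominant eigenvalue belongs to a non-trivial Jordan block; reading off the largest such $k$ over the dominant eigenvalues in each sub-case produces the three-case formula for $p$ (in particular, $p=1$ reflects a $2\times 2$ Jordan block, while $p=3$ reflects the size-$4$ block found in Case~4.2 with $m=1$). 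Optimality is again verified by choosing $U_0$ to be the highest root vector of the dominant eigenvalue, which produces a solution whose norm is bounded below by $c\,t^p e^{-\omega t}$ for $t$ large.

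The main obstacle is the careful bookkeeping across the seven Jordan-structure sub-cases of part~ii): for each sub-case and each admissible pair $(a,c)$ one must verify which Jordan blocks sit at the extremal eigenvalues $\lambda_{1,1}^+,\lambda_{2,1}^+$ and, consequently, which polynomial power actually survives in the $t\to+\infty$ bound (as opposed to being absorbed by a strictly faster exponential coming from a different eigenvalue). Once this case-by-case inspection is done, the exponential upper bound follows from term-by-term estimation in the Riesz basis, and the sharpness of both $\omega$ and $p$ is obtained from the explicit single-mode test solutions described above.
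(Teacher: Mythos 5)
Your strategy is exactly the paper's: the corollary is not proved separately but read off from the explicit expansions \eqref{sol1}--\eqref{sol7} obtained in the Riesz-basis proof of Proposition \ref{prop2}. Your part i) is complete as written: term-by-term estimation via the Riesz norm equivalence \eqref{con1}, the dominant rate from \eqref{lam1}, and sharpness by taking $U_0=E_{1,1}^+$.

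The gap is in part ii), where you defer precisely the step that carries all the content. You assert that ``reading off the largest such $k$ over the dominant eigenvalues in each sub-case produces the three-case formula for $p$,'' but you never perform the reading, and if one does perform it with the paper's own formulas it does not produce that formula. Concretely, in Case 3.2 ($a=c\notin 2\Z^+$, $b\neq 0$) the expansion \eqref{sol5} contains the terms $\alpha_{2,n}^{\pm}\,t\,e^{\lambda_{1,n}^{\pm}t}E_{1,n}^{\pm}$ for \emph{every} $n$, and by \eqref{biao3} the extremal real part $\Re\lambda_{1,1}^{+}$ is attained by an eigenvalue carrying a size-two Jordan block (when $a<2$ one even has $\Re\lambda_{1,n}^{\pm}=-a/2$ for all $n$, all with nontrivial blocks); the factor $t$ there sits at the dominant rate and is not absorbed by any strictly faster exponential, so your procedure yields $p\geq 1$, whereas the stated formula files this case under ``$p=0$, else.'' The same occurs in Case 4.2 with $m\geq 2$, where the dominant mode $n=1\neq m$ carries a size-two block by the paper's own description. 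You must either show these polynomial factors cancel (they do not) or revise the claimed value of $p$; as written, the bookkeeping you appeal to contradicts the conclusion you claim it delivers. A second, smaller inconsistency: you state that a size-$(k+1)$ block contributes $t^{k}$ to the \emph{norm} and that the extremal solution has norm bounded below by $c\,t^{p}e^{-\omega t}$, but \eqref{decay2} bounds $E(t)=2\|U(t)\|_{\mathcal H}^2$; squaring turns $t^{k}e^{\Re\lambda\,t}$ into $t^{2k}e^{2\Re\lambda\,t}$, so with $\omega=-2\Re\lambda_{1,1}^{+}$ the exponential matches but the polynomial power becomes $2k$, not $k$. This convention must be pinned down before the values $p=1$ and $p=3$ can be checked at all.
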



\subsection{Multiplier method}
In this subsection, we apply the multiplier method to establish the decay estimates of the total energy.
The key ingredient is to use an integral inequality (see Lemma \ref{lem5})  which leads to the exponential decay of the energy.
In this subsection, the initial data are assumed to be all real functions.

In Case i), System \e{wave} is reduced to
\be
 \lab{wave4}
  \left\{
 \begin{split}
  &u_{tt}- \D u+a u_{t}+b v_t =0 \quad\quad\  &\text{in } \ \Omega\times (0,+\infty),\\
 & v_{tt}- \D v-b u_t+a v_t=0\quad\quad\quad &\text{in } \ \Omega\times (0,+\infty),\\
  &u=v=0\quad\quad\quad\quad&\text{on} \ ~ \Gamma\times  (0,+\infty),\\
  &(u,u_t)(0)=(u^0,u^1),  \  ( v,v_t)(0)=(v^0,v^1)\quad\quad&\text{in} ~\  \Omega \qquad\qquad\quad
  \end{split}
 \right.
 \ee

 Using the multiplier $u_t$  to $u$-equation and   $v_t$ to $v$-equation, we obtain
 \beq \int _\Omega u_t(u_{tt}-\D u+a u_t+b v_t)+v_t(v_{tt}-\D v+a v_t-b u_t)dx=0
 \eeq
Integrating by parts and using the definition of the total energy $E(t)$, we get
  \be \lab{enre}
      \f{dE(t)}{dt}=-a\int_\Omega u_t^2+v_t^2dx\leq 0\ee
which indicates that  the energy decays if and only if $a>0$.
Then, we  can follow the proof of \cite[page 114, Theorem 8.13]{Komornik94} to conclude by  LaSalle Invariance Principle  \cite{LaSalle}
that System \ref{wave1} is asymptotically stable if $a>0$.

  Using the multiplier $u$  to $u$-equation and $v$ to $v$-equation,  we obtain
 for  $0\leq S\leq T$ that
  \beq      \int_S^T \int _\Omega u(u_{tt}-\D u+a u_t+b v_t)+v(v_{tt}-\D v-b u_t+a v_t)dxdt=0
      \eeq
and after integration by parts,
\beq \begin{split}
\int_\Omega uu_t+vv_tdx\bigg|^T_S-\int^T_S\int_\Omega u_t^2+v_t^2dxdt+\int^T_S\int_\Omega |\nabla u|^2+|\nabla v|^2dxdt \\
+\f{a}{2}\int_\Omega(u^2+v^2)dx\bigg|^T_S+b\int^T_S\int_\Omega uv_t-vu_t dxdt=0
\end{split}
\eeq
thus
\beq\begin{split}
\int^T_S\int_\Omega |\nabla u|^2+|\nabla v|^2dxdt
=\int^T_S\int_\Omega u_t^2+v_t^2dxdt-b\int^T_S\int_\Omega uv_t-vu_t dxdt   \\
-\int_\Omega uu_t+vv_t+\f{a}{2}(u^2+v^2)\bigg|^T_S dx
\end{split}
\eeq
Thanks  to  Cauchy Inequality and Poincar\'{e} Inequality,  we get for every $\varepsilon>0$
\beq \begin{split}
\int^T_S\int_\Omega |\nabla u|^2+|\nabla v|^2dxdt
&\leq (1+ C_{\varepsilon} ) \int^T_S\int_\Omega u_t^2+v_t^2dxdt+\varepsilon \int^T_S\int_\Omega u^2+v^2 dxdt \\
&\quad-\int_\Omega uu_t+vv_t+\f{a}{2}(u^2+v^2)\bigg|^T_S dx \\
&\leq  (1+ C_{\varepsilon} ) \int^T_S\int_\Omega u_t^2+v_t^2dxdt
+C\varepsilon \int^T_S\int_\Omega |\nabla u|^2+|\nabla v|^2dxdt   \\
&\quad+C(E(T)+E(S))
\end{split}
\eeq
Noting \e{enre} implies for $a>0$
  \beq
  \int_S^T\int_\Omega u_t^2+v_t^2 dxdt=\f{1}{a}(E(S)-E(T))\leq \f{1}{a}E(S)
  \eeq
Choosing $\varepsilon>0$ small we get
\beq \begin{split}
 \int^T_S\int_\Omega |\nabla u|^2+|\nabla v|^2dxdt\leq CE(S)
  \end{split}
\eeq
and further
\beq \lab{enes}
\int_S^T E(s)ds\leq C E(S),\quad \forall  0 \leq S \leq T
\eeq
Thanks to  Lemma \ref{lem5}, we conclude that  $E(t)$ decays to 0 exponentially.

In Case ii),  System \e{wave} is reduced to
\be
 \lab{wave2}
  \left\{
 \begin{split}
  &u_{tt}- \D u+a u_{t}=0 \quad\quad\  &\text{in } \ \Omega\times (0,+\infty),\\
 & v_{tt}- \D v+ bu_t+c v_t=0\quad\quad\quad &\text{in } \ \Omega\times (0,+\infty),\\
  &u=v=0\quad\quad\quad\quad&\text{on} \ ~ \Gamma\times  (0,+\infty),\\
  &(u,u_t)(0)=(u^0,u^1),  \  ( v,v_t)(0)=(v^0,v^1)\quad\quad&\text{in} ~\  \Omega \qquad\qquad\quad
  \end{split}
 \right.
 \ee

It is easy to see that if $a \leq 0$, then the energy of $u$ does not decay;  while if $c \leq 0$, by taking $u \equiv 0$, then
the energy of  $v$ does not decay.
Hence, the total energy $E(t)$ of \eqref{wave2} does not decay to 0 if $a \leq 0$ or  $c \leq0$.
It remains to prove that if $a>0$ and $c>0$, $E(t)$ decays to 0 exponentially.

For $\kappa>0$, we set the equivalent energy
 \be \lab{eqen}
   E_{\kappa}(t)=\int _{\Omega}\f{\kappa u_t^2}{2}+\f{v_t^2}{2}+\f{\kappa |\nabla u|^2}{2}+\f{|\nabla v|^2}{2}dx\ee
Using the multiplier $\kappa u_t$ to $u$-equation, $v_t$ to $v$-equation,
  \beq \f{dE_{\kappa}(t)}{dt}=-\int_\Omega \kappa a u_t^2+bu_tv_t+cv_t^2  dx \eeq
Choosing $\kappa>0$ suitably large, namely,
$b^2-4 \kappa a c<0$ then there exists $\delta>0$ such that
   \be  \lab{Ek} \f{dE_{\kappa}(t)}{dt}  \leq -\delta\int_\Omega \kappa u_t^2+v_t^2dx   \ee

Using the multiplier $\kappa u$ to $u$-equation, $v_t$ to $v$-equation,
\beq \begin{split}
\int_\Omega \kappa uu_t+vv_tdx\bigg|^T_S-\int^T_S\int_\Omega \kappa u_t^2+v_t^2dxdt
+\int^T_S\int_\Omega \kappa |\nabla u|^2+|\nabla v|^2dxdt
\\
+ \f{1}{2} \int_\Omega( \kappa a u^2+c v^2)dx\bigg|^T_S+b\int^T_S\int_\Omega vu_t dxdt=0
 \end{split}
\eeq
thus,
\beq  \begin{split}
 \int^T_S\int_\Omega \kappa |\nabla u|^2+|\nabla v|^2dxdt
 =\int^T_S\int_\Omega \kappa u_t^2+v_t^2dxdt-b\int^T_S\int_\Omega vu_t dxdt
 \\
 -\int_\Omega \kappa uu_t+vv_t+ \f{1}{2} ( \kappa a u^2+ c v^2)\bigg|^T_Sdx
 \end{split}
\eeq
Thanks  to  Cauchy Inequality and Poincar\'{e} Inequality,  we get for every $\varepsilon>0$
\beq \begin{split}
 \int^T_S\int_\Omega \kappa |\nabla u|^2+|\nabla v|^2dxdt
&\leq \int^T_S\int_\Omega \kappa u_t^2+v_t^2dxdt+  \int^T_S\int_\Omega \varepsilon v^2 +C_{\varepsilon}  \kappa u_t^2 dxdt
\\
&\quad -\int_\Omega \kappa uu_t+vv_t+ \f{1}{2} ( \kappa a u^2+ c v^2)\bigg|^T_Sdx   \\
&\leq  (1+C_\varepsilon) \int^T_S\int_\Omega \kappa u_t^2+v_t^2dxdt   \\
&\quad +C\varepsilon \int^T_S\int_\Omega  |\nabla v|^2dxdt
+C(E_{\kappa}(T)+E_{\kappa}(S))
 \end{split}
\eeq
Similarly as  as in Case i), we can prove by   \e{Ek} and choosing $\varepsilon >0$ small  the integral inequality for $E_{\kappa}$:
 \beq
\int_S^TE_{\kappa}(t)dt\leq CE_{\kappa}(S),\quad \forall 0\leq S\leq T.
\eeq
Then it follows by  Lemma \ref{lem5}  $E_{\kappa}(t)$ (or equivalently,  $E(t)$) decays to 0 exponentially.
This ends the proof of Proposition \ref{prop2}. \qed

\subsection{Frequency Domain Approach}

In this subsection, we use the frequency domain approach to prove  Proposition \ref{prop2}.
More precisely, we want to get the exponential stability of the semigroup through the uniform  estimate of the resolvent on the imaginary axis  by
Lemma \ref{lem6}  \cite{Huang85, Pruss84} (see also \cite{LiuZhengBook}).

For  $U_0=(u^0,u^1,v^0,v^1)\in \mathcal{H}$,  the solution of  \eqref{abseq} is
$U(t)=S(t)U_0$  where $\{S(t)\}_{t\geq0}$ is the associated $C_0$-semigroup of operator and $\|S(t)U_0\|^2_{\mathcal{H}} =2 E(t)$ for all $t \geq 0$.

In Case (i),
the energy relation \eqref{enre} yields that the energy decays, or equivalently $\{S(t)\}_{t\geq0}$ is a contraction, if and only if  $a>0$.
Next, we prove  Conditions \eqref{cond1} and  \eqref{cond2}  in Lemma \ref{lem6} are satisfied for $a>0$.

We start, by contradiction arguments, to assume that  \eqref{cond1}  does not hold, i.e.,
there exists $\xi \in \R $ and $U=(u,y,v,z)\in \mathcal{D}( \mathcal{A})$ with $\|U\|_{\mathcal{H}}=1$ such that
$(i\xi -\mathcal{A})U=0$, namely,
  \beq \left\{ \begin{split}
       &   \i\xi u-y = 0  \quad&\text{in} ~&\mathcal{H}_0^1 \\
         &  (\i\xi+a)y-\D u+b z= 0  \quad&\text{in} ~&\mathcal{L}^2\\
        &   \i\xi v-z= 0    \quad&\text{in}~ &\mathcal{H}_0^1\\
         &  (\i\xi +a)z-\D v-b y= 0 \quad&\text{in}~ &\mathcal{L}^2
        \end{split}
   \right.
  \eeq
By definition \eqref{inpro}, we easily calculate
   \be\lab{nei2}\Re \langle (\i\xi -\mathcal{A})U,U \rangle_{\mathcal{H}}
   =a(||y||^2_{\mathcal{L}^2}+||z||^2_{\mathcal{L}^2})=0
   \ee
Then it follows that
   $y=z=0 ~~\text{in}~\mathcal{L}^2$
and thus
\beq\left\{ \begin{array}{cc}
         \i\xi  u= 0 \quad\text{in}~ \mathcal{L}^2 \\
           \D u= 0 \quad\text{in} ~\mathcal{L}^2\\
         \i\xi v= 0  \quad \text{in}~ \mathcal{L}^2\\
           \D v= 0\quad\text{in}~ \mathcal{L}^2
        \end{array}
\right.\eeq
The theory of elliptic equation with Dirichlet boundary condition implies that
$u = v = 0 \in \mathcal{H}^2\bigcap \mathcal{H}^1_0$. Consequently we have  $U=0 \in \mathcal{H}$
which contradicts with $\|U\|_{\mathcal{H}}=1$. Therefore \eqref{cond1}  is satisfied for $a>0$.

Next we continue to prove that  \eqref{cond2}  holds for $a>0$. Otherwise, thanks to  the continuity of the resolvent
$\mathcal{R}(\i\xi,\mathcal{A})$ with respect to $\xi \in \R$,   \eqref{cond2}  is not valid at $\infty$, i.e.,
there exists $\{U_n=(u_n,y_n,v_n,z_n)\}_{n\in \Z^+}\subset \mathcal{D}( \mathcal{A})$ with  $\|U_n\|_{\mathcal{H}}=1$
and $\{ \xi_n\}_{n \in \Z^+} \subset \R$ with
$|\xi_n|\rightarrow+\infty, \|(\i\xi_n-\mathcal{A})U_n \|_{\mathcal{H}} \rightarrow 0 $ as $n\rightarrow +\infty$, namely,
 \beq\left\{ \begin{split}
         & \i\xi_n u_n-y_n \rightarrow 0~~& \text{in \ } &\mathcal{H}_0^1 \\
          & (\i\xi_n+a)y_n-\D u_n+b z_n\rightarrow 0 ~~&\text{in\ } &\mathcal{L}^2\\
          & \i\xi_n v_n-z_n\rightarrow 0  ~~ &\text{in\ } &\mathcal{H}_0^1\\
          & (\i\xi_n+a)z_n-\D v_n-b y_n\rightarrow 0~~&\text{in\ } &\mathcal{L}^2
        \end{split}
\right.\eeq
Similarly as \eqref{nei2},  we have
  \be \lab{nei3} \Re \langle (\i\xi- \mathcal{A})U_n,U_n \rangle_\mathcal{H}=a(||y_n||^2_{\mathcal{L}^2}+||z_n||^2_{\mathcal{L}^2})
   \ee
On the other hand,
 \beq
 \begin{split}
|\Re \langle (\i\xi_n -\mathcal{A})U_n,U_n \rangle_\mathcal{H}|\leq
||(  \i\xi_n-\mathcal{A})U_n||_\mathcal{H} \cdot ||U_n||_\mathcal{H}
=||( \i\xi_n-\mathcal{A})U_n||_\mathcal{H}\rightarrow 0 .
\end{split}
\eeq
Then
 $z_n\rightarrow 0 \in \mathcal{L}^2, y_n\rightarrow 0 \in  \mathcal{L}^2$, and thus
 \beq\left\{ \begin{split}
        \i\xi_n  u_n\rightarrow 0 ~ \quad&\text{in} &\mathcal{L}^2 \\
            \f{\D u_n}{\i \xi_n}\rightarrow 0~\quad&\text{in} &\mathcal{L}^2\\
       \i \xi_n  v_n\rightarrow 0  ~ \quad&\text{in} &\mathcal{L}^2\\
           \f{ \D v_n}{\i\xi_n}\rightarrow 0~\quad&\text{in} &\mathcal{L}^2
        \end{split}
  \right.\eeq
Thanks to  Nirenberg Inequality \cite[page 135, Theorem 5.2]{Sobolev},  we get
\beq
||\nabla u_n||_{\mathcal{L}^2}
\leq  C ||\D u_n||_{\mathcal{L}^2} ^\f{1}{2}||u_n||^\f{1}{2}_{\mathcal{L}^2}
\leq C ||\f{\D u_n}{\xi_n}||_{\mathcal{L}^2} ^\f{1}{2}||\xi_n u_n||^\f{1}{2}_{\mathcal{L}^2}\rightarrow 0
\eeq
and in a same way $\nabla v_n\rightarrow0 \in \mathcal{L}^2$.
Consequently  $||U_n||_{\mathcal{H}}\rightarrow 0$ as $n\rightarrow +\infty$.
This contradicts with $||U_n||_{\mathcal{H}}=1$ and implies  \eqref{cond2}  for $a>0$.
The exponential decay of the semigroup of operator $\{S(t)\} _{t \geq 0}$ associated to \eqref{abseq}  is a consequence of Lemma \ref{lem6}.


In Case (ii),
 it is easy to see, as in Section 3.2,  that if $a\leq 0$ or $c\leq 0$,  system \eqref{wave2} is not asymptotically stable.
It remains to prove that  \eqref{wave2}  is exponentially stable if $a>0$ and $c>0$.

We introduce an equivalent inner product of \eqref{inpro},
\be
\lab{inpro2}
\langle U, \tilde{U} \rangle_\mathcal{H_{\kappa}}=\int_\Omega \kappa  \nabla \overline{u} \cdot \nabla \tilde{u}
+\kappa \overline{y} \tilde{y}  + \nabla \overline{v} \cdot \nabla \tilde{v}  + \overline{z} \tilde{z}\, dx,
\ee
where $\kappa>0$. Clearly, $\mathcal{H}$ is a Hilbert space under $\langle \cdot, \cdot \rangle_{\mathcal{H_{\kappa}}}$ and is denoted by $\mathcal{H_{\kappa}}$ with the corresponding norm $||\cdot||_{\mathcal{H_{\kappa}}}$.
For  $U_0=(u^0,u^1,v^0,v^1)\in \mathcal{H_{\kappa}}$,  the solution of  \eqref{abseq} is
$U(t)=S(t)U_0$  where $\{S(t)\}_{t\geq0}$ is the associated $C_0$-semigroup of operator and
$\|S(t)U_0\|^2_{\mathcal{H_{\kappa}}} =2 E_{\kappa}(t)$ for all $t \geq 0$.
By Lemma \ref{lem6}, it suffices to prove that Conditions  \eqref{cond1}  and  \eqref{cond2}  hold for  $a>0$ and $c>0$.

We start, by contradiction arguments, to assume that  \eqref{cond1} does not hold, i.e.,
there exists $\xi \in \R $ and $U=(u,y,v,z)\in \mathcal{D}( \mathcal{A})$ with $\|U\|_{\mathcal{H_{\kappa}}}=1$ such that
$(\i\xi -\mathcal{A})U=0$, namely,
 \beq\left\{ \begin{split}
        &  \i\xi u-y = 0 \quad &\text{in} ~ &\mathcal{H}_0^1 \\
         &  (\i\xi+a)y-\Delta u = 0\quad&\text{in} ~ &\mathcal{L}^2\\
          & \i\xi v-z= 0 \quad &\text{in}~  &\mathcal{H}_0^1\\
         &  (\i\xi +c)z-\Delta v+b y= 0\quad&\text{in}~  &\mathcal{L}^2
        \end{split}
  \right.\eeq
We compute
  \be  \label{neiji3} \Re \langle (\i\xi -\mathcal{A})U,U \rangle _\mathcal{H_{\kappa}}
  =a \kappa ||y||^2_{\mathcal{L}^2}+c||z||^2_{\mathcal{L}^2}+b \Re \langle y,z \rangle_{\mathcal{L}^2}=0.
  \ee
  Taking $\kappa>0$ suitably large, namely,
$b^2-4 \kappa a c<0$ then there exists $\delta>0$ such that
 \be \label {delta1}0= \Re \langle (\i\xi -\mathcal{A})U,U \rangle _\mathcal{H_{\kappa}}
  \geq  \delta  ( ||y||^2_{\mathcal{L}^2}+ ||z||^2_{\mathcal{L}^2} ).
  \ee
Then   $y=z=0 \in \mathcal{L}^2$ and thus
 \beq \left\{ \begin{split}
          & u= 0 \quad&\text{in \ } &\mathcal{L}^2 \\
          & \Delta u= 0 \quad&\text{in \  } &\mathcal{L}^2\\
           & v= 0  \quad &\text{in \ } &\mathcal{L}^2\\
          & \Delta v= 0\quad&\text{in \ }& \mathcal{L}^2
        \end{split}
   \right. \eeq
 Obviously the theory of elliptic equation with Dirichlet boundary condition implies that
$u = v = 0 \in \mathcal{H}^2\bigcap \mathcal{H}^1_0$. Consequently we have  $U=0 \in \mathcal{H_{\kappa}}$
which contradicts with $\|U\|_{\mathcal{H_{\kappa}}}=1$ and yields that  \eqref{cond1}  is satisfied for $a>0$ and $c>0$.

Next we continue to prove that  \eqref{cond2}  holds for $a>0$. Otherwise, thanks to  the continuity of the resolvent
$\mathcal{R}(\i\xi,\mathcal{A})$ with respect to $\xi \in \R$,   \eqref{cond2}  is not valid at $\infty$, i.e.,
there exists $\{U_n=(u_n,y_n,v_n,z_n)\}_{n\in \Z^+}\subset \mathcal{D}( \mathcal{A})$ with  $\|U_n\|_{\mathcal{H_{\kappa}}}=1$
and $\{ \xi_n\}_{n \in \Z^+} \subset \R$ with
$|\xi_n|\rightarrow+\infty, \|(\i\xi_n-\mathcal{A})U_n \|_{\mathcal{H_{\kappa}}} \rightarrow 0 $ as $n\rightarrow +\infty$, namely,
  \beq\left\{ \begin{split}
         & \i\xi_n u_n-y_n \rightarrow 0  \quad &\text{in} ~&\mathcal{H}_0^1 \\
          & (\i\xi_n+a)y_n-\D u_n\rightarrow 0 \quad&\text{in} ~&\mathcal{L}^2\\
          & \i\xi_n v_n-z_n\rightarrow 0 \quad &\text{in} ~&\mathcal{H}_0^1\\
          & (\i\xi_n+c)z_n-\D v_n+b y_n\rightarrow 0\quad&\text{in}~&\mathcal{L}^2
        \end{split}
  \right.\eeq
Let  $ \kappa>\f{b^2}{4ac}$.
 We get  similarly as \eqref{neiji3} and \eqref{delta1} that
  \be \Re \langle (i\xi_n-A)U_n,U_n \rangle _\mathcal{H_{\kappa}}
  =a \kappa ||y_n||^2_{\mathcal{L}^2}+c||z_n||^2_{\mathcal{L}^2}+b \Re \langle y_n,z_n \rangle_{\mathcal{L}^2}
  \geq \delta (||y_n||^2_{\mathcal{L}^2}+||z_n||^2_{\mathcal{L}^2})
  \ee
 for some $\delta >0$.  On the other hand,
\beq
\begin{split}
|\Re \langle (\i\xi_n-\mathcal{A})U_n,U_n \rangle_\mathcal{H_{\kappa}}|
&\leq ||(\i\xi_n-\mathcal{A})U_n||_\mathcal{H_{\kappa}} \cdot ||V_n||_\mathcal{H_{\kappa}}   \\
&= ||(\i\xi_n-\mathcal{A})U_n||_\mathcal{H_{\kappa}} \rightarrow 0 .
\end{split}
\eeq
Hence we have
$y_n\rightarrow 0   \in \mathcal{L}^2, z_n\rightarrow 0  \in \mathcal{L}^2$ and thus
 \beq\left\{ \begin{split}
         &\i\xi_n  u_n\rightarrow 0 \quad &\text{in \ } &\mathcal{L}^2 \\
          &  \f{ \D u_n}{\i\xi_n} \rightarrow 0\quad&\text{in \ } &\mathcal{L}^2\\
          &\i\xi_n  v_n\rightarrow 0 \quad &\text{in \ }& \mathcal{L}^2\\
          &  \f{\D v_n}{\i\xi_n} \rightarrow 0\quad&\text{in \ } &\mathcal{L}^2
        \end{split}
 \right.\eeq
 Thanks to  Nirenberg Inequality \cite[page 135, Theorem 5.2]{Sobolev}, we get
\beq
||\nabla u_n||_{\mathcal{L}^2}
\leq  C ||\D u_n||_{\mathcal{L}^2} ^\f{1}{2}||u_n||^\f{1}{2}_{\mathcal{L}^2}
\leq C ||\f{\D u_n}{\xi_n}||_{\mathcal{L}^2} ^\f{1}{2}||\xi_n u_n||^\f{1}{2}_{\mathcal{L}^2}\rightarrow 0
\eeq
and in a same way $\nabla v_n\rightarrow0 \in \mathcal{L}^2$.
Consequently  $||U_n||_{\mathcal{H_{\kappa}}}\rightarrow 0$ as $n\rightarrow +\infty$.
This contradicts with $||U_n||_{\mathcal{H_{\kappa}}}=1$ and implies  \eqref{cond2}  for $a>0$ and $c>0$.
The exponential decay of the semigroup $\{S(t)\} _{t \geq 0}$ associated to \eqref{abseq}  is again a consequence of Lemma \ref{lem6}.

\section{Extension and Remarks}

\begin{thm}
If the boundary conditions in \eqref{wave} are replaced by mixed types of boundary conditions,  we can adopt the
above three approaches to conclude that \eqref{2} is still the sufficient and necessary condition of exponential stability of  the system.  For instance,
   \beq
 \lab{wave3}
  \left\{
 \begin{split}
  &u_{tt}- \D u+\alpha u_{t}+\beta v_{t} =0 \quad\quad\  &\text{in} \ &\Omega\times (0,+\infty),\\
 & v_{tt}- \D v+\gamma u_{t}+\eta v_{t}=0\quad\quad\quad &\text{in} \ &\Omega\times (0,+\infty),\\
 & u=v=0\quad\quad\quad\quad&\text{on} \  &\Gamma_0\times  (0,+\infty),\\
 & \f{\pa u}{\pa \nu }=     \f{\pa v}{\pa \nu }=0 \quad\quad\quad\quad&\text{on}\  &\Gamma_1\times  (0,+\infty),\\
 & (u,u_{t})(0)=(u^0,u^1),  \  ( v,v_{t})(0)=(v^0,v^1)\quad\quad&\text{in} \  &\Omega
  \end{split}
 \right.
 \eeq
 where  $\Gamma_0 \bigcup\Gamma_1=\pa \Omega,\Gamma_0\bigcap\Gamma_1=\emptyset $.
\end{thm}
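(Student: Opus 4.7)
My plan is to revisit each of the three approaches from Section 3 and argue that they all extend to \eqref{wave3} with only cosmetic changes. Throughout, I replace the energy space by $\mathcal{H}=\mathcal{H}^1_{\Gamma_0}\times \mathcal{L}^2\times \mathcal{H}^1_{\Gamma_0}\times \mathcal{L}^2$, where $\mathcal{H}^1_{\Gamma_0}:=\{u\in H^1(\Omega):u|_{\Gamma_0}=0\}$; provided $\Gamma_0$ has positive $(d-1)$-dimensional Hausdorff measure, Poincar\'e's inequality still holds on $\mathcal{H}^1_{\Gamma_0}$ and the total energy \eqref{wave-energy} remains equivalent to $\tfrac12\|U\|_\mathcal{H}^2$. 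The reduction to canonical forms via Lemmas \ref{lem1}--\ref{lem2} is purely algebraic in the coefficient matrix $B$, and since the orthogonal change of variables $(\tilde u,\tilde v)=(u,v)P$ acts componentwise on the boundary conditions, both $\tilde u=\tilde v=0$ on $\Gamma_0$ and $\partial_\nu \tilde u=\partial_\nu \tilde v=0$ on $\Gamma_1$ are preserved. It therefore suffices to prove the mixed-boundary analogue of Proposition \ref{prop2}.

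For the multiplier method and the frequency-domain approach, I would rerun the arguments of Sections 3.2 and 3.3 essentially verbatim. The only places where the boundary condition enters are the integrations by parts of $\Delta u$ and $\Delta v$ against the multipliers $u_t,v_t,u,v$ (or their resolvent analogues), and these boundary integrals vanish because the test function lies in $\mathcal{H}^1_{\Gamma_0}$ (hence vanishes on $\Gamma_0$) while $\partial_\nu u=\partial_\nu v=0$ on $\Gamma_1$. Consequently the dissipativity identities \eqref{enre}, \eqref{nei2}, \eqref{nei3} and the subsequent inequalities \eqref{enes}, \eqref{Ek} are unchanged, and the contradiction arguments proceed as before. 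The two analytic ingredients that must be checked for mixed data are the uniqueness statement for the mixed Dirichlet-Neumann Laplace problem (immediate by pairing $\Delta u=0$ with $u$) and the Nirenberg interpolation $\|\nabla u\|_{\mathcal{L}^2}\leq C\|\Delta u\|_{\mathcal{L}^2}^{1/2}\|u\|_{\mathcal{L}^2}^{1/2}$ under mixed boundary conditions, which is standard on $C^2$ domains.

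The only calculation that requires genuine rework is the Riesz basis approach of Section 3.1, which is one-dimensional. Taking for instance $\Omega=(0,\pi)$ with $\Gamma_0=\{0\}$, $\Gamma_1=\{\pi\}$, the Dirichlet Laplacian $-\partial_{xx}$ is replaced by the mixed-boundary Laplacian whose eigenfunctions are $\sin((n-\tfrac12)x)$ with eigenvalues $(n-\tfrac12)^2$. Substituting $n\mapsto n-\tfrac12$ throughout the separation-of-variables computation produces characteristic equations of the same form as \eqref{sp} and the same asymptotic expansions \eqref{jianjin}, \eqref{jianjin2}; the analogues of Proposition \ref{prop3} and the root-vector constructions then follow from Lemma \ref{Riesz}. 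I expect the step requiring the most care to be the classification of multiple-eigenvalue/resonance cases analogous to Cases 1--4 of Section 3.1: the exceptional set of coefficient pairs $(a,c)$ producing resonances is shifted (no longer $2\mathbb{Z}^+$), so these cases must be re-enumerated, but the structural construction of generalized eigenvectors and the polynomial-in-$t$ corrections to \eqref{sol1}--\eqref{sol7} are identical. Combining the three approaches yields the equivalence of exponential stability of \eqref{wave3} with condition \eqref{2}.
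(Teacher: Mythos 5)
The paper offers no written proof of this extension---it simply asserts that the three approaches of Section 3 carry over---and your proposal is a correct, faithful expansion of exactly that plan: the energy space $\mathcal{H}^1_{\Gamma_0}$, the vanishing of all boundary terms in the multiplier and resolvent identities, and the shift of the $1$-D spectrum to $(n-\tfrac12)^2$ with the correspondingly relocated resonance set are precisely the points that need checking, and you identify them all (including the necessary standing assumption that $\Gamma_0$ have positive surface measure so that Poincar\'e's inequality holds and the energy is a genuine norm, which the paper leaves implicit). The only technical point worth adding is that near the interface $\overline{\Gamma_0}\cap\overline{\Gamma_1}$ the $\mathcal{H}^2$ regularity used in defining $\mathcal{D}(\mathcal{A})$ may fail, so one should take $\mathcal{D}(\mathcal{A})$ in the weak (variational) sense with $\Delta u\in\mathcal{L}^2$; your estimates only use $\|\Delta u\|_{\mathcal{L}^2}$, so the arguments go through unchanged.
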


\begin{thm} If the damping and the coupling are both acted on the boundary, we can adopt the
multiplier method to conclude that \eqref{2} is still the sufficient and necessary condition of exponential stability of  the following system
 \beq
 \lab{wave4}
  \left\{
 \begin{split}
  &u_{tt}- \D u =0 \quad\quad\  &\text{in } \ &\Omega\times (0,+\infty),\\
  &v_{tt}- \D v=0\quad\quad\quad &\text{in } \ &\Omega\times (0,+\infty),\\
 & u=v=0\quad\quad\quad\quad&\text{on} \  &\Gamma_0\times  (0,+\infty),\\
  &\f{\pa u}{\pa \nu }+\alpha u_t+\beta v_t=0     &\text{on}  \ &\Gamma_1\times  (0,+\infty),\\
   &\f{\pa v}{\pa \nu }+\gamma u_t+\eta v_t=0       &\text{on}    \  &\Gamma_1\times  (0,+\infty),\\
  &(u,u_{t})(0)=(u^0,u^1),  \  ( v,v_{t})(0)=(v^0,v^1)\quad\quad&\text{in} \  &\Omega
  \end{split}
 \right.
 \eeq
where $\Gamma_0 \bigcup\Gamma_1=\pa \Omega,\, \Gamma_0\bigcap\Gamma_1=\emptyset $ with some suitable geometric conditions (see \cite{1992-BLR, Komornik94}).
For instance,  one can assume that there exists  $x_0\in \R^n$, such that
  $  \Gamma_0 =\{x \in \pa \Omega | (x-x_0 )\cdot \nu (x)\leq 0\} $ and $  \Gamma_1 =\{x \in \pa \Omega | (x-x_0 )\cdot \nu (x) > 0\}  \neq \emptyset  $
where $\nu(x)$ is the unit outer normal filed.
\end{thm}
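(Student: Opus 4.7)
The plan is to adapt the multiplier argument of Section 3.2 to the boundary dissipation, once again via the Schur reduction of Lemma \ref{lem1}. First I would extend Lemma \ref{lem2} to this system: if $P\in\R^{2\times2}$ is real orthogonal with $P^TBP=\tilde B$, the change of variables $\binom{\tilde u}{\tilde v}=P^T\binom{u}{v}$ preserves the interior free wave equations and the Dirichlet condition on $\Gamma_0$, while on $\Gamma_1$ one computes $\partial_\nu\binom{\tilde u}{\tilde v}=-\tilde B\binom{\tilde u_t}{\tilde v_t}$. Hence it suffices to treat the two canonical forms of Proposition \ref{prop2}.

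Using $u_t$ and $v_t$ as multipliers on the respective equations, in Case i) I obtain
\[ \frac{dE}{dt}=\int_{\Gamma_1}\bigl(u_t\partial_\nu u+v_t\partial_\nu v\bigr)\,d\sigma=-a\int_{\Gamma_1}(u_t^2+v_t^2)\,d\sigma, \]
which is nonpositive exactly when $a>0$. In Case ii) the analogous computation for the weighted energy
\[ E_\kappa(t)=\tfrac12\int_\Omega\bigl(\kappa|u_t|^2+\kappa|\nabla u|^2+|v_t|^2+|\nabla v|^2\bigr)\,dx, \]
with $\kappa>b^2/(4ac)$, produces a coercive estimate $\frac{dE_\kappa}{dt}\leq-\delta\int_{\Gamma_1}(u_t^2+v_t^2)\,d\sigma$ for some $\delta>0$, provided $a,c>0$.

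The core step is the Lions--Komornik multiplier estimate with $m(x)=x-x_0$. Multiplying the $u$-equation by $m\cdot\nabla u$ and by $\tfrac{d-1}{2}u$, and similarly for $v$, then integrating over $\Omega\times[S,T]$ and using the Rellich identity, yields an inequality of the form $\int_S^T E(t)\,dt\leq [\mathcal{X}(t)]_S^T+(\text{boundary terms})$ with $|\mathcal{X}(t)|\leq CE(t)$. On $\Gamma_0$, using $u=v=0$ (so $\nabla u=(\partial_\nu u)\nu$) together with the geometric assumption $(x-x_0)\cdot\nu\leq0$, the $\Gamma_0$ contribution carries the right sign. On $\Gamma_1$, I would substitute the damping boundary condition so that $\partial_\nu u,\partial_\nu v$ become linear combinations of $u_t,v_t$; the resulting $\Gamma_1$ integrals are then quadratic in the velocity traces and are absorbed by the dissipation bound above. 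Combining everything, one obtains $\int_S^T E(t)\,dt\leq CE(S)$ for all $0\leq S\leq T$, and Lemma \ref{lem5} yields the exponential decay.

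For necessity, if \eqref{2} fails then in one of the canonical forms either $a\leq 0$ in Case i) or $\min(a,c)\leq 0$ in Case ii); in either situation the dissipation identity above shows the total energy cannot decay coercively, and an explicit non-decaying solution can be constructed by separation of variables as in Section 3.1, the spectral problem being modified only through the boundary condition. The main obstacle I anticipate is not the Rellich identity itself but rather the cross terms of the form $(m\cdot\nabla u)\,v_t$ that the coupled Neumann condition introduces on $\Gamma_1$, together with the lower-order traces $\|u\|_{L^2(\Gamma_1)}$ and $\|v\|_{L^2(\Gamma_1)}$, which cannot be absorbed by Poincar\'e alone; the standard remedy is a compactness--uniqueness argument based on Holmgren's theorem, forcing the limit of any non-decaying sequence to vanish identically and thereby closing the estimate.
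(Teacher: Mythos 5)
The paper itself offers no proof of this theorem: it is stated in the ``Extension and Remarks'' section with only the assertion that the multiplier method of Section 3.2 carries over, together with citations to Bardos--Lebeau--Rauch and Komornik for the geometric conditions. Your outline is a correct and essentially standard realization of exactly that strategy: the orthogonal change of variables does transform the Neumann--feedback condition $\partial_\nu(u,v)^T=-B(u_t,v_t)^T$ into $\partial_\nu(\tilde u,\tilde v)^T=-\tilde B(\tilde u_t,\tilde v_t)^T$, your dissipation identities in both canonical cases are right (in Case i) the cross terms $\pm b\,u_tv_t$ cancel on $\Gamma_1$, in Case ii) the choice $\kappa>b^2/(4ac)$ makes the boundary quadratic form coercive), and the Rellich/Komornik multiplier $m\cdot\nabla u+\tfrac{d-1}{2}u$ with the sign condition $(x-x_0)\cdot\nu\le 0$ on $\Gamma_0$ is the standard route to $\int_S^T E\,dt\le CE(S)$.

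Two remarks on where the real work lies, which you have mostly anticipated. First, on $\Gamma_1$ the term $2\,\partial_\nu u\,(m\cdot\nabla u)$ involves the full (tangential) gradient trace, which is controlled only by playing it against the good-signed term $-(m\cdot\nu)|\nabla u|^2$ on $\Gamma_1$; this typically needs $m\cdot\nu$ bounded below on $\Gamma_1$ or a separate localization, not just the dissipation bound on the velocity traces. Second, the lower-order traces $\|u\|_{L^2(\Gamma_1)}$, $\|v\|_{L^2(\Gamma_1)}$ are indeed the classical obstruction, and your compactness--uniqueness remedy is the accepted fix (at the price of non-explicit constants, consistent with the paper's own remark that the multiplier method does not yield optimal rates). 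Your necessity argument is fine: in each canonical form the dissipation identity shows the relevant partial energy is non-decreasing when $a\le 0$ or $c\le 0$.
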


\begin{thm}
If we consider a wave system with multiple propagation speeds
\beq
 \lab{wave6}
  \left\{
 \begin{split}
 & u_{tt}- \D u+\alpha u_{t}+\beta v_{t} =0 \quad\quad\  &\text{in } \ &\Omega\times (0,+\infty),\\
  &v_{tt}-c^2 \D v+\gamma u_{t}+\eta v_{t}=0\quad\quad\quad& \text{in } \ &\Omega\times (0,+\infty),\\
 & u=v=0\quad\quad\quad\quad&\text{on} \  &\Gamma\times  (0,+\infty),\\
 & (u,u_{t})(0)=(u^0,u^1),  \  ( v,v_{t})(0)=(v^0,v^1)\quad\quad&\text{in} \  &\Omega
  \end{split}
 \right.
 \eeq
with $c \neq 1$, we can adopt the multiplier method or the frequency domain approach to prove the exponential stability of  the above system under the assumption
\beq
   \begin{cases}
     \alpha>0,\\ \eta>0    ,\\     \alpha\eta-\beta\gamma>0 \end{cases}\eeq
 \end{thm}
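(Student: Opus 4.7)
The plan is to extend the weighted-energy strategy of Case ii) in Section 3.2 and Section 3.3 to accommodate the mismatched propagation speeds. Introduce the equivalent weighted energy
\be
E_\kappa(t) = \f{1}{2}\int_\Omega \kappa u_t^2 + \kappa|\nabla u|^2 + v_t^2 + c^2 |\nabla v|^2\, dx,
\ee
with $\kappa>0$ to be chosen. Multiplying the $u$-equation by $\kappa u_t$ and the $v$-equation by $v_t$, integrating by parts and adding yields
\be
\f{dE_\kappa}{dt} = -\int_\Omega \kappa\alpha u_t^2 + (\kappa\beta+\gamma)\, u_t v_t + \eta v_t^2\, dx,
\ee
so the task reduces to choosing $\kappa$ so that this quadratic form in $(u_t,v_t)$ is negative definite, i.e. $4\kappa\alpha\eta > (\kappa\beta+\gamma)^2$. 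Viewed as a quadratic in $\kappa$, the obstruction $\beta^2\kappa^2+2(\beta\gamma-2\alpha\eta)\kappa+\gamma^2$ has discriminant $16\alpha\eta(\alpha\eta-\beta\gamma)$, strictly positive by hypothesis; the two roots in $\kappa$ have positive sum $2(2\alpha\eta-\beta\gamma)/\beta^2$ and nonnegative product $\gamma^2/\beta^2$, so both are positive (the degenerate cases $\beta=0$ or $\gamma=0$ are handled directly by inspection). Selecting any $\kappa$ strictly between these roots produces $\delta>0$ with $\f{dE_\kappa}{dt}\leq-\delta\int_\Omega(u_t^2+v_t^2)\,dx$.

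Next, I apply the displacement multipliers $\kappa u$ on the $u$-equation and $v$ on the $v$-equation over $[S,T]\times\Omega$ and integrate by parts as in Section 3.2. The resulting identity expresses $\int_S^T\!\int_\Omega(\kappa|\nabla u|^2+c^2|\nabla v|^2)\,dx\,dt$ as the kinetic integral $\int_S^T\!\int_\Omega(\kappa u_t^2+v_t^2)\,dx\,dt$, minus the cross terms $\int_S^T\!\int_\Omega(\kappa\beta\, u v_t+\gamma\, v u_t)\,dx\,dt$, plus boundary terms at $t=S,T$ bounded by $C(E_\kappa(S)+E_\kappa(T))$. The cross terms are absorbed via Cauchy-Schwarz with a small $\varepsilon$ combined with Poincar\'e's inequality, while the kinetic integral is controlled by $E_\kappa(S)/\delta$ using the dissipation estimate. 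This gives the integral inequality $\int_S^T E_\kappa(t)\,dt\leq C E_\kappa(S)$ for all $0\leq S\leq T$, and Lemma \ref{lem5} delivers exponential decay of $E_\kappa$, equivalently of $E$. The frequency-domain route proceeds identically on $\mathcal{H}$ equipped with the inner product $\langle U,\tilde U\rangle_{\mathcal{H}_\kappa}=\int_\Omega\kappa\nabla\overline u\cdot\nabla\tilde u+\kappa\overline y\tilde y+c^2\nabla\overline v\cdot\nabla\tilde v+\overline z\tilde z\,dx$: the same choice of $\kappa$ forces $\Re\langle(\i\xi-\mathcal{A})U,U\rangle_{\mathcal{H}_\kappa}\geq\delta(\|y\|_{\mathcal{L}^2}^2+\|z\|_{\mathcal{L}^2}^2)$, and the contradiction arguments of Section 3.3 then verify \eqref{cond1}--\eqref{cond2} of Lemma \ref{lem6} without change.

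The principal difficulty is the choice of the single scalar weight $\kappa$. Because the coupling is non-symmetric (generically $\beta\neq\gamma$), the naive choice $\kappa=1$ yields only the strictly stronger sufficient condition $4\alpha\eta>(\beta+\gamma)^2$, whereas the sharp hypothesis in \eqref{2} reads $\alpha\eta>\beta\gamma$. The decisive observation is the clean factorization $16\alpha\eta(\alpha\eta-\beta\gamma)$ of the discriminant, which shows that the algebraic condition \eqref{2} is precisely what is needed to open a nonempty interval of admissible weights. Once $\kappa$ has been so chosen, the remaining estimates in both approaches are routine adaptations of Sections 3.2--3.3; the multiple-speed coefficient $c^2$ enters only through the harmless constant $c^2$ in front of $|\nabla v|^2$ and does not affect any of the absorption arguments.
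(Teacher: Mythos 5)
Your proposal is correct and follows exactly the route the paper indicates (the paper states this result in Section 4 with only a pointer back to the multiplier and frequency-domain arguments of Sections 3.2--3.3, since the Schur reduction of Lemma \ref{lem2} is unavailable when $c\neq 1$). The one detail that genuinely goes beyond Section 3.2 --- where the canonical form has $\beta=0$ so that a large weight $\kappa$ trivially works --- is your selection of $\kappa$ via the discriminant $16\alpha\eta(\alpha\eta-\beta\gamma)$ of $\beta^2\kappa^2+2(\beta\gamma-2\alpha\eta)\kappa+\gamma^2$, and that computation is right: under $\alpha>0$, $\eta>0$, $\alpha\eta>\beta\gamma$ it opens a nonempty interval of positive admissible weights, after which both the integral-inequality argument with Lemma \ref{lem5} and the resolvent estimates with Lemma \ref{lem6} carry over verbatim, the factor $c^2$ entering only as a harmless constant in front of $|\nabla v|^2$.
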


\begin{rem}[Comparison of Three Approaches]
The advantage of Riesz basis approach is to give the explicit expression of the solution
so that the relation between the exponential stability of the system and the coefficients can be relatively easy to discover.
In addition, the optimal  decay rate can be  obtained through very careful analysis of both the spectrum and the eigenvectors.
However, the Riesz basis properties of the eigenvectors are hard to check in higher dimensional case.
The multiplier method is rather simple to apply without restrains in space dimension and works for variable coefficients case (even for nonlinear problems),
but it requires strong geometric assumptions.
The frequency domain approach is also applicable for all space dimension and  for the variable coefficients case
 without much information of the eigenvalues and eigenvectors.
 Nevertheless, the optimal decay rate can not be obtained in general by multiplier method or by frequency domain approach.
\end{rem}

\appendix
\section{Appendix}
\begin{lem}
\lab{Riesz}
Let $\{e_n\}_{n\in \Z^+}$ be a Riesz basis on Hilbert space $\mathcal{H}$. Let $\{E_n\}_{n\in \Z^+}$  be a basis on $\mathcal{H}$.
If $\sum\limits_{n\in \Z^+} ||E_n-e_n||^2  < +\infty$, then $\{E_n\}_{n\in \Z^+}$ is a Riesz basis on $\mathcal{H}$.
\end{lem}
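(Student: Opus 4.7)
The statement is a form of the classical Bari stability theorem for Riesz bases; the plan is to combine a reduction to the orthonormal case with a compact perturbation argument based on the Fredholm alternative.

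First I would use the defining property of a Riesz basis to produce an orthonormal basis $\{\phi_n\}_{n\in\Z^+}$ of $\mathcal{H}$ and a bounded invertible operator $S:\mathcal{H}\to\mathcal{H}$ with $Se_n=\phi_n$. Since $S$ is a topological isomorphism, the image $\{SE_n\}$ is still a Schauder basis, satisfies $\sum\|SE_n-\phi_n\|^2 \leq \|S\|^2\sum\|E_n-e_n\|^2<+\infty$, and is a Riesz basis if and only if $\{E_n\}$ is; thus I may assume from the outset that $\{e_n\}$ itself is orthonormal. Then I would define a linear operator $T$ on $\mathcal{H}$ by specifying $Te_n=E_n-e_n$ and extending by linearity and continuity. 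Parseval's identity on $\{e_n\}$ gives
\[
\sum_{n\in\Z^+} \|Te_n\|^2 \;=\; \sum_{n\in\Z^+} \|E_n-e_n\|^2 \;<\; +\infty,
\]
so $T$ is Hilbert--Schmidt, hence compact, and by construction $(I+T)e_n=E_n$. Consequently, proving that $I+T$ is bounded invertible would realize $\{E_n\}$ as the image of an orthonormal basis under a topological isomorphism, which is precisely the definition of a Riesz basis.

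For the invertibility of $I+T$, I would invoke the Fredholm alternative: since $T$ is compact, $I+T$ is Fredholm of index zero with closed range, so bijectivity is equivalent to density of the range. The range contains every $E_n$, and by hypothesis $\{E_n\}$ is a Schauder basis of $\mathcal{H}$, so its closed linear span is the whole of $\mathcal{H}$; density, and thus invertibility, follows at once. The main obstacle is conceptual rather than computational: the operator $T$ is naturally defined via Parseval on an orthonormal basis but not on a general Riesz basis, which is exactly why the preliminary reduction through $S$ is indispensable. Once that reduction is in place, the Hilbert--Schmidt estimate, the index-zero Fredholm theory, and the use of the basis hypothesis to discard any nontrivial cokernel are entirely standard.
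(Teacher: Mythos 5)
Your proposal is correct, and it is worth noting that the paper itself contains no proof of this lemma: it is stated as ``an equivalent form of Bari's Theorem'' with a citation to Gohberg--Kre\u{\i}n, so you are supplying an argument rather than paralleling one. What you give is in fact the standard proof of Bari's theorem from that reference: reduce to the orthonormal case by the isomorphism $S$ built into the definition of a Riesz basis (noting that $S$ preserves quadratic closeness, completeness, and the Riesz-basis property), write $E_n=(I+T)e_n$ with $T$ Hilbert--Schmidt hence compact, and then invert $I+T$ by combining the index-zero Fredholm alternative with the density of $\mathrm{span}\{E_n\}$. Two small refinements: the boundedness of $T$ is not literally ``Parseval'' but the Cauchy--Schwarz estimate
\begin{equation*}
\Bigl\| T\sum_n c_n e_n \Bigr\| \;\le\; \sum_n |c_n|\,\|Te_n\| \;\le\; \Bigl(\sum_n |c_n|^2\Bigr)^{1/2}\Bigl(\sum_n \|E_n-e_n\|^2\Bigr)^{1/2},
\end{equation*}
which is what lets you extend $T$ from $\mathrm{span}\{e_n\}$ to all of $\mathcal{H}$ before computing its Hilbert--Schmidt norm; and your argument only uses completeness of $\{E_n\}$ (via density of the range of $I+T$), not the full Schauder-basis hypothesis, so you actually prove a slightly stronger statement than the one quoted --- which is harmless here and, if anything, convenient, since in the body of the paper the authors verify linear independence and quadratic closeness of the eigenvector families rather than the basis property itself.
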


Lemma \ref{Riesz} is an equivalent form of Bari's Theorem, see \cite[page 317, Theorem 2.3] {Gohberg}.
The special case that
$\{e_n\}_{n\in \Z^+}$ is an orthogonal Riesz basis of $\mathcal{H}$ can be found as \cite[App. D, Theorem 3]{PoschelBook}.

\begin{lem}  \cite[page 103, Theorem 8.1]{Komornik94}
\lab{lem5}
Let $E:  [0,+\infty) \rightarrow [0,+\infty)$ be a non increasing function
Suppose that there exists $C>0$ such that
\beq
\int_S^T E(s)ds\leq C E(S),\quad \forall  0 \leq S \leq T,
\eeq
then
\beq
\lab{es}
E(t)\leq E(0)e^{1-\f{t}{C}}, \quad \forall t\geq C
\eeq
\end{lem}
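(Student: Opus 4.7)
The plan is to turn the hypothesis into a first-order differential inequality for the tail integral of $E$ and then exploit monotonicity of $E$ to pass back from the integral to the pointwise bound.

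First I would argue that the tail $F(S) \defs \int_S^{+\infty} E(s)\,ds$ is well-defined and satisfies $F(S)\le CE(S)$. Since $E\ge 0$, the map $T\mapsto \int_S^T E(s)\,ds$ is nondecreasing, and by hypothesis it is bounded by $CE(S)$ for every $T\ge S$. Thus the limit as $T\to +\infty$ exists and is $\le CE(S)$. In particular $F(0)\le CE(0)<+\infty$.

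Next, since $E$ is non-increasing and nonnegative (hence locally integrable), $F$ is locally absolutely continuous with $F'(S)=-E(S)$ almost everywhere. The inequality $F(S)\le CE(S)=-CF'(S)$ therefore reads
\beq
F'(S)+\frac{1}{C}F(S)\le 0 \quad\text{a.e.}
\eeq
A standard integrating-factor/Gronwall argument (multiply by $e^{S/C}$ and integrate from $0$ to $S$) yields
\beq
F(S)\le F(0)\, e^{-S/C}\le C E(0)\, e^{-S/C}, \qquad \forall S\ge 0.
\eeq

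Finally, I would use monotonicity of $E$ once more to convert this tail estimate into the claimed pointwise bound. For $t\ge C$, setting $S=t-C$,
\beq
C\, E(t) \;=\; \int_{t-C}^{t} E(t)\,ds \;\le\; \int_{t-C}^{t} E(s)\,ds \;\le\; F(t-C) \;\le\; C E(0)\, e^{-(t-C)/C},
\eeq
so $E(t)\le E(0)\,e^{1-t/C}$, which is exactly \eqref{es}.

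I do not expect a real obstacle here: the argument is entirely classical. The only mildly delicate point is that $E$ is only assumed to be non-increasing, not absolutely continuous, so $F'(S)=-E(S)$ should be justified by the Lebesgue differentiation theorem (or by working with the difference quotient $F(S)-F(S+h)=\int_S^{S+h}E$ directly and then invoking the Gronwall-type lemma for absolutely continuous functions, which $F$ is, being the integral of a locally bounded function).
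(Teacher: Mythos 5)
Your argument is correct, and it is essentially the classical proof of this integral inequality from the cited reference (Komornik, Theorem 8.1), which the paper itself does not reproduce: introduce the tail $F(S)=\int_S^{+\infty}E$, derive $F'+F/C\le 0$ a.e.\ using local absolute continuity of $F$, integrate, and recover the pointwise bound from the monotonicity of $E$ on $[t-C,t]$. Nothing further is needed.
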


\begin{lem} \cite[page 4, Theorem 1.3.2]{LiuZhengBook}
\lab{lem6}
Let
$\{S(t)=e^{t \mathcal{A}} \}_{t\geq 0}$ be a $C_0$-semigroup of contractions on a  Hilbert space $\mathcal{H}$.
 Then  $\{S(t)\}_{t\geq0}$ is exponentially stable if and only   the following conditions are satisfied
 \begin{align}  \label{cond1}
& (1) \  \i \R =\{ \i\xi \, | \,  \xi \in \R\}\subset \rho( \mathcal{A});   \\ \label{cond2}
 & (2)   \   \limsup_{ |\xi| \rightarrow +\infty} ||\mathcal{R}(\i\xi, \mathcal{A} )||<+\infty.
 \end{align}
\end{lem}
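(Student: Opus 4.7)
The plan is to prove both directions of the equivalence. Necessity is routine: if $\|S(t)\| \leq C e^{-\omega t}$ for some $\omega > 0$, the Laplace representation $\mathcal{R}(\lambda, \mathcal{A}) x = \int_0^\infty e^{-\lambda t} S(t) x \, dt$ converges for $\Re \lambda > -\omega$ and yields $\|\mathcal{R}(\lambda, \mathcal{A})\| \leq C/(\Re \lambda + \omega)$; specialising to $\lambda = \i\xi$ gives both $\i \R \subset \rho(\mathcal{A})$ and the uniform resolvent bound on $\i\R$.

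For sufficiency --- the substantive content (the Gearhart--Pr\"uss--Huang theorem) --- suppose $M := \sup_{\xi \in \R} \|\mathcal{R}(\i\xi, \mathcal{A})\| < \infty$. The argument proceeds in three steps. First, the Neumann expansion $\mathcal{R}(\lambda, \mathcal{A}) = \sum_{n \geq 0} (\i\xi - \lambda)^n \mathcal{R}(\i\xi, \mathcal{A})^{n+1}$ convergent for $|\lambda - \i\xi| < 1/M$ extends the resolvent analytically to the strip $\{-\eta < \Re \lambda \leq 0\}$ with $\eta := 1/(2M)$, uniformly bounded by $2M$. Second, for $x \in \mathcal{H}$ and $\varepsilon > 0$, the shifted orbit $t \mapsto e^{-\varepsilon t} S(t) x$ (extended by zero for $t<0$) lies in $L^2(\R; \mathcal{H})$ by contractivity, with Hilbert-valued Fourier transform $\xi \mapsto \mathcal{R}(\varepsilon + \i\xi, \mathcal{A}) x$; the vector-valued Plancherel identity yields
\[ \int_0^\infty e^{-2\varepsilon t} \|S(t) x\|^2 \, dt = \frac{1}{2\pi} \int_\R \|\mathcal{R}(\varepsilon + \i\xi, \mathcal{A}) x\|^2 \, d\xi. \]
Restricting to $x \in \mathcal{D}(\mathcal{A}^2)$ and exploiting the identity $\mathcal{R}(\lambda, \mathcal{A}) x = x/\lambda + \mathcal{A} x/\lambda^2 + \mathcal{R}(\lambda, \mathcal{A}) \mathcal{A}^2 x / \lambda^2$ gives $O(\xi^{-2})$ integrability of the right-hand side, hence --- after passing $\varepsilon \to 0^+$ by monotone convergence --- the Datko-type bound $\int_0^\infty \|S(t) x\|^2\, dt \leq C(\|x\|^2 + \|\mathcal{A} x\|^2 + \|\mathcal{A}^2 x\|^2)$ on $\mathcal{D}(\mathcal{A}^2)$. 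Third, this extends to all $x \in \mathcal{H}$ via the smoothing operators $J_h x := h^{-1} \int_0^h S(s) x\, ds$ (which map $\mathcal{H} \to \mathcal{D}(\mathcal{A})$, and iterated into $\mathcal{D}(\mathcal{A}^2)$), density, and the uniform boundedness principle; one then picks $t_0 > 0$ with $\|S(t_0)\| < 1$ and invokes the semigroup bootstrap $\|S(n t_0)\| \leq \|S(t_0)\|^n$ to conclude exponential decay.

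The main obstacle is the integrability estimate in Step 2: the naive pointwise bound $\|\mathcal{R}(\varepsilon + \i\xi, \mathcal{A}) x\| \leq 2M \|x\|$ is not square-integrable in $\xi$, forcing one to extract the $\lambda^{-2}$ algebraic decay via the resolvent identity applied to $\mathcal{A}^2 x$ --- hence the restriction to $\mathcal{D}(\mathcal{A}^2)$ and the ensuing density/bootstrap needed to transport the estimate back to $\mathcal{H}$ without losing the exponential rate. Vector-valued Plancherel is precisely where the Hilbert space structure enters essentially; the conclusion is known to fail on general Banach spaces, so some genuinely Hilbertian input of this type is unavoidable.
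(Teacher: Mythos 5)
The paper offers no proof of this lemma --- it is quoted verbatim from Liu--Zheng --- so the only question is whether your argument for the Gearhart--Pr\"uss--Huang theorem closes. The necessity direction and Steps 1--2 up to the vector-valued Plancherel identity are correct and follow the standard route. The gap is in the device you use to make $\int_\R \|\mathcal{R}(\varepsilon+\i\xi,\mathcal{A})x\|^2\,d\xi$ converge, namely restricting to $x\in\mathcal{D}(\mathcal{A}^2)$: the resulting estimate $\int_0^\infty\|S(t)x\|^2\,dt\leq C\bigl(\|x\|^2+\|\mathcal{A}x\|^2+\|\mathcal{A}^2x\|^2\bigr)$ is a \emph{graph-norm} bound, and Step 3 cannot upgrade it to the bound $\int_0^\infty\|S(t)x\|^2\,dt\leq C'\|x\|^2$ on all of $\mathcal{H}$ that Datko's theorem requires. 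The uniform boundedness principle needs pointwise finiteness of the square function on \emph{all} of $\mathcal{H}$, which you do not have; and the mollifiers $J_h$ satisfy $\|\mathcal{A}^2J_h^2x\|\leq 4h^{-2}\|x\|$, so the constant you obtain for $S(t)J_h^2$ blows up as $h\to 0$ while $\|I-J_h^2\|$ is \emph{not} small in operator norm, so you never reach $\|S(t_0)\|<1$ for a single $t_0$. What your Steps 2--3 actually deliver is $\|S(t)x\|\leq Ct^{-1/2}\|x\|_{\mathcal{D}(\mathcal{A}^2)}$ (using monotonicity of $t\mapsto\|S(t)x\|$), i.e.\ polynomial decay for smooth data --- exactly the kind of conclusion that holds for the merely polynomially stable systems of Liu--Rao cited in this paper, and strictly weaker than uniform exponential stability.

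The standard repair keeps your framework but replaces the $\mathcal{D}(\mathcal{A}^2)$ expansion by the resolvent identity at fixed $\xi$: for $0<\varepsilon\leq 1$, $\mathcal{R}(\varepsilon+\i\xi,\mathcal{A})=\mathcal{R}(1+\i\xi,\mathcal{A})+(1-\varepsilon)\mathcal{R}(\varepsilon+\i\xi,\mathcal{A})\mathcal{R}(1+\i\xi,\mathcal{A})$, whence $\|\mathcal{R}(\varepsilon+\i\xi,\mathcal{A})x\|\leq(1+2M)\|\mathcal{R}(1+\i\xi,\mathcal{A})x\|$ (the uniform bound $2M$ on the closed right half-plane follows from your Neumann-series step combined with the Hille--Yosida bound). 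A second application of Plancherel, now at abscissa $1$, gives $\int_\R\|\mathcal{R}(1+\i\xi,\mathcal{A})x\|^2\,d\xi=2\pi\int_0^\infty e^{-2t}\|S(t)x\|^2\,dt\leq\pi\|x\|^2$ by contractivity. This yields $\int_0^\infty e^{-2\varepsilon t}\|S(t)x\|^2\,dt\leq\tfrac{1}{2}(1+2M)^2\|x\|^2$ for \emph{every} $x\in\mathcal{H}$ uniformly in $\varepsilon$, and monotone convergence plus Datko finishes the proof with no density or mollification needed.
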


\end{CJK*}

\end{document}